\documentclass[12pt]{amsart}
\usepackage[T1]{fontenc}
\usepackage[utf8]{inputenc}

\usepackage[mathcal]{euler}   

\usepackage{amsmath,amssymb,amsthm}
\usepackage{mathtools}        
\allowdisplaybreaks

\usepackage[numbers,sort&compress]{natbib}

\usepackage[left=1.25in, bottom=1in, right=1in, top=1in,
            paperwidth=8.5in, paperheight=11in]{geometry}
\usepackage{setspace} 
\usepackage{placeins} 

\usepackage[shortlabels]{enumitem}

\usepackage{array}
\usepackage{longtable}
\usepackage{afterpage}
\usepackage{float}
\usepackage{pdflscape}

\usepackage{graphicx}
\usepackage{epsfig}
\usepackage{epstopdf}
\usepackage{psfrag}

\usepackage{tikz}
\usetikzlibrary{calc,shapes.geometric}
\usepackage{tikz-cd}
\usepackage{ytableau}
\usepackage{varwidth}

\usepackage{xy}
\xyoption{all}

\usepackage{slashed}
\usepackage{mathdots}   

\usepackage{marginnote}
\usepackage[ddmmyyyy]{datetime}
\usepackage[normalem]{ulem}   

\usepackage{xcolor}

\usepackage[unicode=true,
  linktocpage,
  linkbordercolor={0.5 0.5 1},
  citebordercolor={0.5 1 0.5},
  linkcolor=blue]{hyperref}

\makeatletter
\newcommand*\bigcdot{\mathpalette\bigcdot@{.5}}
\newcommand*\bigcdot@[2]{\mathbin{\vcenter{\hbox{\scalebox{#2}{$\m@th#1\bullet$}}}}}
\makeatother

\newcommand{\Q}{\mathbb{Q}}

\newcommand{\andd}{\qquad\text{ and }\qquad}

\newcommand{\orr}{\qquad\text{ or }\qquad}

\newcommand{\steph}[1]{\textcolor{black}{#1}}

\swapnumbers
\numberwithin{equation}{section}

\theoremstyle{plain}
\newtheorem*{theorem*}{Theorem}
\newtheorem*{lemma*}{Lemma}
\newtheorem{theorem}[equation]{Theorem}
\newtheorem{lemma}[equation]{Lemma}
\newtheorem{proposition}[equation]{Proposition}
\newtheorem{corollary}[equation]{Corollary}

\theoremstyle{definition}
\newtheorem{definition}[equation]{Definition}
\newtheorem*{definition*}{Definition}
\newtheorem{example}[equation]{Example}

\newtheorem*{notation*}{Notation}

\theoremstyle{remark}

\newtheorem*{remark*}{Remark}

\begin{document}

\title[Schur-positive multipartite graphs]{A Schur-positivity classification for
complete multipartite graphs}

\author{Ethan Shelburne}
\address{
Department of Computer Science,
 University of British Columbia,
 Vancouver BC V6T 1Z4, Canada}
\email{shelburneethan@gmail.com}
\author{Stephanie van Willigenburg}
\address{
 Department of Mathematics,
 University of British Columbia,
 Vancouver BC V6T 1Z2, Canada}
\email{steph@math.ubc.ca}

\thanks{{Both} authors were supported in part by the Natural Sciences and Engineering Research Council of Canada.}
\subjclass[2020]{05C15, 05C25, 05E05, 16T30}
\keywords{chromatic symmetric function, multipartite graph, Schur function, Schur-positivity}

\begin{abstract}
A graph is Schur-positive if its chromatic symmetric function expands non-negatively in the Schur basis. We determine a full Schur-positivity classification for complete multipartite graphs by showing that a complete multipartite graph $K_\lambda$ is Schur-positive if and only if either $\lambda_i\in \{1,2\}$ for all $i$ or $\lambda=(3,2^\beta)$ for some $\beta\ge 1$. These results extend earlier classifications for complete bipartite and complete tripartite graphs to full generality. Our proofs combine structural arguments ruling out most cases, with a combinatorial analysis of Schur coefficients for the remaining family $K_{(3,2^\beta)}$ via special rim hook $G$-tabloids. Along the way, we establish a simpler formula for Schur coefficients of incomparability graphs, which we then apply to compute the coefficients of interest in terms of non-increasing sequences.
\end{abstract}

\maketitle

\section{Introduction}

In 1995, Richard Stanley extended the study of graph colorings to the realm of symmetric functions by introducing the chromatic symmetric function \cite{stan_95}. This innovation sparked extensive research into chromatic symmetric functions and related power series, uncovering deep connections to representation theory and algebraic structures. Symmetric functions that expand non-negatively in the elementary basis or Schur basis, termed  $\mathbf{e}$-positive or Schur-positive respectively, often possess significant algebraic or combinatorial properties, including meaningful interpretations of their coefficients. For instance, symmetric functions with non-negative Schur basis expansions correspond to the Frobenius image of representations of the symmetric group \cite{sym_group}. A central question posed in \cite{stan_95} seeks to classify graphs that are Schur-positive or $\mathbf{e}$-positive, leading to three major problems that continue to shape the field, which we frame as conjectures.
\begin{enumerate}

\item (The Stanley-Stembridge Conjecture, \cite{stan_93}): All claw-free incomparability graphs are $\mathbf{e}$-positive.

\item (The Nonisomorphic Tree Conjecture, \cite{stan_95}): No two non-isomorphic trees have the same chromatic symmetric function.

\item (The Claw-free Conjecture, \cite{stan_98}): All claw-free graphs are Schur-positive.

\end{enumerate}

A proof of the Stanley-Stembridge Conjecture was given in \cite{hikita24}. Additionally, various methods have been employed to show subfamilies of claw-free incomparability graphs satisfy this conjecture in \cite{centered_at_vertex, Dahl18, GebSag01}, among others. The Nonisomorphic Tree Conjecture has been computationally verified for trees with up to 29 vertices in \cite{trees29}, and further progress on this conjecture is detailed in \cite{AdMOZ}, for instance. Vesselin Gasharov advanced the Claw-free Conjecture by demonstrating that all claw-free incomparability graphs are Schur-positive in \cite{gash_96}. A key question towards this from the opposite direction is to classify when complete multipartite graphs are Schur-positive. This is because complete multipartite graphs are incomparability graphs that almost always contain the claw. David Wang and Monica Wang provided a combinatorial formula for determining the Schur coefficients of chromatic symmetric functions, which facilitated various results, including a classification of the Schur-positivity of complete bipartite and tripartite graphs \cite{WW2020}. Building on the combinatorial formula of \cite{WW2020}, special rim hook G-tabloids were introduced as a new framework for expressing Schur coefficients \cite{gen_nets}. This perspective not only provides a more tractable combinatorial interpretation, but also enabled the establishment of Schur-positivity for all generalized net graphs \cite{gen_nets}. These developments highlight the potential of such combinatorial models as a systematic tool for studying Schur-positivity, motivating the further investigation undertaken in this work.



In this paper, we fully answer the aforementioned key question, and our paper is structured as follows. Section \ref{sect:prelims} covers the required preliminaries. Section \ref{sect:srh_G_tabs} introduces the key combinatorial object, that is, the special rim hook $G$-tabloid. Lastly, Section \ref{sect:comp_multi_graphs} includes several results which together provide a classification for the Schur-positivity of all complete multipartite graphs in Theorem~\ref{thm:classif}.

\section{Preliminaries}
\label{sect:prelims}
 A \textit{partition} of $n\ge 0$ is a sequence of weakly decreasing positive integers $\lambda=(\lambda_1,\ldots,\lambda_k)$ whose sum is $n$.
 The \textit{length} of $\lambda$ is given by $\ell(\lambda)=k$. In the case where $n=0$, we say $\lambda$ is the \textit{empty partition}. We use exponents to indicate repeated integers in a partition. For instance, $\lambda=(5,5,3,3,1)=(5^2,3^2,1)$. The \textit{diagram} of a partition $\lambda$ of $n$ is an array of $n$ boxes (called \textit{cells}) in left-justified rows such that row $i$ contains $\lambda_i$ boxes, where the rows are indexed from top to bottom and the columns are indexed from left to right. Below, we depict the diagram of the partition $(5,5,3,3,1)$.


 \begin{figure}[H]
 \begin{tikzpicture}
 \node[scale=.6] at (0,0) (a) {\ydiagram{5,5,3,3,1}};
  \end{tikzpicture}
\label{fig:young_diagram_example}
 \end{figure}

 A \textit{composition} of $n\ge 0$ is a sequence of positive integers $\kappa=[\kappa_1,\ldots,\kappa_j]$ whose sum is $n$. The \textit{length} of $\kappa$ is given by $\ell(\kappa)=j$. In the case where $n=0$, we say $\kappa$ is the \textit{empty composition}.
 Again, we may use exponents to denote repeated integers in a composition. Given a composition $\kappa$, we use $\Lambda(\kappa)$ to denote the partition obtained by arranging the integers in $\kappa$ in weakly decreasing order. 


A \textit{partially ordered set} or \textit{poset} is a set $P$ equipped with a binary relation $\le$ that is reflexive, antisymmetric, and transitive. We say that $x,y\in P$ are \textit{comparable} if $x\le y$ or $y\le x$, and we say they are \textit{incomparable} otherwise. A \textit{total order} is a partial order where every pair of elements is comparable. We write $x<y$ if $x\le y$ and $x\ne y$. We say $x$ \textit{covers} $y$ and write $y\lessdot x$ if $y<x$ and there is no $z\in P$ such that $y<z<x$. The \textit{Hasse diagram} of $P$ is the graph with vertices $P$ and an edge from $y$ up to $x$ if $y\lessdot x$.

\begin{example}
\label{ex:hasse}
Consider the poset $P=\{a,b,c,d,e,f\}$ with relation $\le$ satisfying precisely the conditions
\[
a\lessdot b\lessdot f,\qquad a\lessdot c \lessdot e,\qquad d\lessdot c,\andd b\lessdot e.
\]
The Hasse diagram of $P$ is depicted below.
\begin{figure}[H]
    \centering
         \begin{tikzpicture}
\node[circle,draw,scale=.75] at (0,0) (a) {$a$};
\node[circle,draw,scale=.75] at (0,1) (b) {$b$};
\node[circle,draw,scale=.75] at (0,2) (f) {$f$};
\node[circle,draw,scale=.75] at (1,0) (d) {$d$};
\node[circle,draw,scale=.75] at (1,1) (c) {$c$};
\node[circle,draw,scale=.75] at (1,2) (e) {$e$};
\draw (a)--(b)--(f);
\draw (d)--(c)--(e);
\draw (a)--(c);
\draw (b)--(e);
\end{tikzpicture}
\end{figure}
\end{example}

    The \textit{incomparability graph} of a poset $P$ is the graph $\text{inc}(P)$ on vertices corresponding to the elements of $P$ such that, for all $x,y\in P$, $x$ is adjacent to $y$ if and only if $x$ and $y$ are incomparable. It is important to note that not every graph is an incomparability graph of some poset.

\begin{example}
\label{ex:inc_graph}
The incomparability graph $\text{inc}(P)$ of the poset $P$ in Example \ref{ex:hasse} is depicted below.
\begin{figure}[H]
    \centering
         \begin{tikzpicture}
\node[circle,draw,scale=.75] at (0,0) (a) {$a$};
\node[circle,draw,scale=.75] at (0,1) (b) {$b$};
\node[circle,draw,scale=.75] at (0,2) (f) {$f$};
\node[circle,draw,scale=.75] at (1,0) (d) {$d$};
\node[circle,draw,scale=.75] at (1,1) (c) {$c$};
\node[circle,draw,scale=.75] at (1,2) (e) {$e$};
\draw (a)--(d)--(f);
\draw (d)--(b)--(c);
\draw (f)--(c);
\draw (f)--(e);
\end{tikzpicture}
\end{figure}
\end{example}

Next, let $\mathbf{x} = {x_1, x_2, x_3, \ldots}$ be a countably infinite collection of commuting variables. We consider the \textit{algebra of formal power series} over the rational numbers in the variables $\mathbf{x}$, denoted by $\Q[[\mathbf{x}]]$. A function $f(\mathbf{x})$ is termed \emph{symmetric} if it is invariant under any permutation of the variables $\mathbf{x}$. The subspace of symmetric functions
\[
\text{Sym}(\mathbf{x})=\Q [e_1, e_1, e_3, \ldots]
\]
where $e_1, e_1, e_3, \ldots$ are defined below, forms another algebra known as the \textit{algebra of symmetric functions}.

We first focus on the classical Schur basis for Sym$(\mathbf{x})$, which we introduce using semistandard Young tableaux. A \textit{semistandard Young tableau} (SSYT) of \textit{shape} $\lambda$ is a filling $Q$ of the cells of the diagram $\lambda$ with positive integers such that rows weakly increase from left to right and columns strictly increase from top to bottom.  Given a semistandard Young tableau $Q$, we define the \textit{weight} of $Q$ to be
    \[
    \text{wt}(Q)=x_1^{\#1s}x_2^{\#2s}x_3^{\#3s}\cdots.
    \]

\begin{example}
\label{ex:SSYT}
 We portray several examples of SSYTs of shape (4,2,1) below. 
 
 \begin{figure}[H]
\begin{align*}
\begin{ytableau}
       1 &  2 & 3  & 4 \\
       5  & 6  \\
        7
\end{ytableau}
\qquad
\begin{ytableau}
       1 &  1 &  2 & 4 \\
       2  & 3  \\
        4
\end{ytableau}
\qquad
\begin{ytableau}
       2 &  2 & 2  & 7 \\
       3  & 3  \\
        5
\end{ytableau}
\qquad
\begin{ytableau}
       1 &  2 & 5  & 5 \\
       5  & 5  \\
        6
\end{ytableau}
\end{align*}

\label{fig:SSYTs}
\end{figure}
 
 From left to right, these SSYTs have weights
 \begin{align*}
 \label{eq:SSYTweights}
 x_1x_2x_3x_4x_5x_6x_7,\qquad x_1^2x_2^2x_3x_4^2,\qquad x_2^3x_3^2x_5x_7,\andd x_1x_2x_5^4x_6.
 \end{align*}

\end{example}

Given some partition $\lambda$, the \textit{Schur function} associated to $\lambda$ is
\[
s_{\lambda}=\sum_{Q}\text{wt}(Q)
\]
where the sum spans over all semistandard Young tableaux $Q$ of shape $\lambda$.

Next, we introduce the classical elementary basis for Sym$(\mathbf{x})$. We define the \textit{$i$th elementary symmetric function} to be
    \[
    e_i=s_{(1^i)}
    \]
    and the \textit{elementary symmetric function} associated to a partition $\lambda$ to be
    \[
    e_{\lambda}=e_{\lambda_1}\cdots e_{\lambda_{k}}.
    \]

We have that
\[
\mathbf{s}=\{s_{\lambda}\,\mid\, \lambda\text{ is a partition}\}\andd \mathbf{e}=\{e_{\lambda}\,\mid\, \lambda\text{ is a partition}\}
\]
are each bases for $\text{Sym}(\mathbf{x})$. If $\textbf{b}$ is a basis for $\text{Sym}(\mathbf{x})$ which is indexed by partitions, we use the notation
\[
[b_{\lambda}]f(\mathbf{x})= \text{the coefficient of $b_{\lambda}$ in the expansion of $f(\mathbf{x})\in \text{Sym}(\mathbf{x})$ in the basis $\mathbf{b}$}. 
\]
Given some $f(\mathbf{x})\in \text{Sym}(\mathbf{x})$, we say $f(\mathbf{x})$ is \textit{Schur-positive} if $[s_{\lambda}]f(\mathbf{x})\ge 0$ for all partitions $\lambda$. We say $f(\mathbf{x})$ is \textit{$\mathbf{e}$-positive} if $[e_{\lambda}]f(\mathbf{x})\ge 0$ for all partitions $\lambda$. One well-known important fact regarding these two bases is that the elementary symmetric functions themselves are Schur-positive. Accordingly, the $\mathbf{e}$-positivity of $f(\mathbf{x})$ implies the Schur-positivity of $f(\mathbf{x})$. Likewise, if $f(\mathbf{x})$ is not Schur-positive, then $f(\mathbf{x})$ is also not $\mathbf{e}$-positive.

Next, we define a family of functions belonging to Sym$(\mathbf{x})$ known as chromatic symmetric functions, which were introduced in \cite{stan_95}. Given a graph $G$ with vertices $V(G) = \{v_1, v_2, \ldots , v_n\}$, a \textit{proper coloring} of $G$ in $q$ colors is a map
\[
\mathcal{C}:V(G)\to\{1,2,3,\ldots, q\}
\]
such that, if $u$ and $v$ are adjacent, then $\mathcal{C}(u)\ne \mathcal{C}(v)$. For a proper coloring $\mathcal{C}$ of $G$, we define
    \[
\mathbf{x}^{\mathcal{C}}=x_{\mathcal{C}(v_1)}x_{\mathcal{C}(v_2)}\cdots x_{\mathcal{C}(v_n)}.
    \]
    The \textit{chromatic symmetric function} of a graph $G$ is the formal power series
\[
X_G(\mathbf{x})=\sum_{\mathcal{C}}\mathbf{x}^{\mathcal{C}},
\]
where the sum ranges over all proper colorings $\mathcal{C}$ of $G$.
    The chromatic symmetric function is symmetric because permuting the variables of the function is equivalent to permuting the colors in each proper coloring. We say that a graph $G$ is \textit{Schur-positive} (resp. \textit{$\mathbf{e}$-positive}) if $X_G(\mathbf{x})$ is Schur-positive (resp. $\mathbf{e}$-positive).

There are certain structural properties associated with graphs which are either Schur-positive or $\mathbf{e}$-positive. In order to discuss a useful example of such a property, we require several more definitions. First, we consider the \textit{dominance partial order} on partitions. For partitions $\lambda=(\lambda_1,\ldots,\lambda_k)$ and $\mu=(\mu_1,\ldots,\mu_j)$, we say $\mu$ is \textit{dominated} by $\lambda$ and write $\lambda \ge \mu$ if
\[\lambda_1+\cdots+\lambda_i \ge \mu_1+\cdots+\mu_i
\]
for all $i\ge 1$. For this definition, we set $\lambda_i=0$ for $i>k$ and $\mu_i=0$ for $i>j$.

Next, for a graph $G$ with vertices $V(G)$, we define a \textit{stable set} in $G$ to be a subset $U \subseteq V(G)$ for which all vertices in $U$ are pairwise non-adjacent. If $|U|=m$, we may refer to $U$ as a \textit{stable $m$-set}. For a partition $\lambda=(\lambda_1,\ldots,\lambda_k)$, we define a \textit{stable partition of $G$ of type $\lambda$} to be a division of all the vertices $V(G)$ into $k$ disjoint stable sets whose cardinalities are given by $\lambda_1,\ldots,\lambda_k$. We refer to each stable set of cardinality $\lambda_i$ as a \emph{$\lambda_i$-part.} We may also refer to a stable partition of $G$ of type $\lambda$ as a \textit{stable $k$-partition of $G$} when $\ell(\lambda)=k$. Additionally, we call a stable partition of $G$ \textit{semi-ordered} if the stable sets of the same cardinality are assigned an order.

Building on the prior definitions, we say a graph $G$ is \textit{nice} if whenever $G$ has a stable partition of type $\lambda$ and $\lambda\ge \mu$, then $G$ also has a stable partition of type $\mu$. The contrapositive of the following proposition (\cite[Proposition 1.5]{stan_98}) is a useful tool for disproving the Schur-positivity of certain graphs.

\begin{proposition}[\cite{stan_98}]
\label{prop:schur_pos_means_nice}
If $G$ is Schur-positive, then $G$ is nice.
\end{proposition}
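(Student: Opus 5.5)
The proof rests on two standard facts about the expansion of $X_G$. First, write $X_G$ in the augmented monomial basis: $X_G=\sum_{\lambda} a_\lambda \tilde m_\lambda$, where $a_\lambda$ is the number of stable partitions of $G$ of type $\lambda$. Equivalently, $X_G=\sum_\lambda r_\lambda m_\lambda$ with $r_\lambda>0$ exactly when $G$ has a stable partition of type $\lambda$. Indeed, a proper coloring whose color classes have sizes $\lambda_1,\lambda_2,\ldots$ in the first few colors is the same thing as a semi-ordered stable partition of that type. The key consequence is that the monomial $x_1^{\mu_1}x_2^{\mu_2}\cdots$ appears in $X_G$ if and only if $G$ has a stable partition of type $\mu$.

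Second, we use Kostka numbers: $s_\nu=\sum_\mu K_{\nu\mu} m_\mu$, where $K_{\nu\mu}\ge 0$, and $K_{\nu\mu}>0$ if and only if $\nu\ge\mu$ in dominance order. The "only if" direction is the usual triangularity. The "if" direction is the classical fact that an SSYT of shape $\nu$ and content $\mu$ exists whenever $\nu\ge\mu$. I would cite this rather than reprove it; if a proof is wanted, one can induct by moving boxes down along raising operators, or use the fact that $m_\mu$ appears in $h_\mu$ expansions.

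Now suppose $G$ is Schur-positive, so $X_G=\sum_\nu c_\nu s_\nu$ with every $c_\nu\ge 0$. Take $\lambda\ge\mu$ and suppose $G$ has a stable partition of type $\lambda$. Then $[m_\lambda]X_G=\sum_\nu c_\nu K_{\nu\lambda}>0$, so some $\nu$ has $c_\nu>0$ and $K_{\nu\lambda}>0$, which means $\nu\ge\lambda$. By transitivity of dominance, $\nu\ge\mu$, so $K_{\nu\mu}>0$. All the terms in
\[
[m_\mu]X_G=\sum_\nu c_\nu K_{\nu\mu}
\]
are nonnegative, and at least one of them is strictly positive, so $[m_\mu]X_G>0$. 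By the first fact, $G$ therefore has a stable partition of type $\mu$. This shows $G$ is nice.

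The main obstacle is the positivity direction of the Kostka criterion, namely $K_{\nu\mu}>0$ whenever $\nu\ge\mu$. This is a standard result (for example, in Macdonald), and I would invoke it. Everything else is bookkeeping about the coefficients of $X_G$.
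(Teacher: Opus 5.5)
Your proof is correct. The paper gives no proof of its own to compare against: it imports this proposition from \cite{stan_98}. Your argument is the standard one behind it, namely that the monomial support of $X_G$ is exactly the set of types of stable partitions of $G$, while the monomial support of any Schur-positive homogeneous symmetric function is a down-set in dominance order because $K_{\nu\mu}>0$ if and only if $\nu\ge\mu$. One thing worth making explicit is that $\mu$ must be a partition of $|V(G)|$. The paper's definition of dominance does not state this, but niceness is false otherwise, and it is exactly the setting in which the Kostka criterion you cite applies.
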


With the objective of discussing an important result from \cite{WW2020}, we now introduce a combinatorial object known as a special rim hook tabloid.

 A \textit{rim hook} of \textit{length} $k$ is a sequence of $k$ connected cells in a diagram, each of which lies on the southeast boundary, and whose removal results in a smaller diagram. For any rim hook, we call each path between consecutive cells a \textit{step}. If the cells are in different rows, we use the term \textit{north step} ($N$-step), whereas if the cells are in different columns, we use the term $\textit{east step}$ ($E$-step). We sometimes refer to a rim hook by indicating the sequence of steps it includes, read from bottom to top and left to right. For example, an $NEN$-hook is a rim hook spanning four cells and consisting of an $N$-step then an $E$-step followed by another $N$-step.

 Let $\kappa=[\kappa_1,\ldots,\kappa_{j}]$ be a composition and $\lambda=(\lambda_1,\ldots,\lambda_k)$ be a partition. A \textit{rim hook tabloid of shape $\lambda$ and content} $\kappa$ is a filling of the cells of the diagram of $\lambda$ with $j$ sequences of connected cells $r_i$ such that $r_1$ is a rim hook of length $\kappa_1$ and, for all $1\le i \le j-1$, if $r_1,\ldots, r_i$ are removed from $\lambda$ to form $\tilde{\lambda}$, $r_{i+1}$ is a rim hook of $\tilde{\lambda}$ of length $\kappa_{i+1}$.  A \textit{special rim hook tabloid} (SRH tabloid) is a rim hook tabloid such that every rim hook intersects the first column. We define the \textit{sign} of an SRH tabloid $T$ to be
 \[
 \mathrm{sgn}(T)=(-1)^{\text{\# north steps in $T$}}.
 \]
 Moreover, we use the notation $\mathcal{T}_{\lambda}$ to denote the set of all SRH tabloids of shape $\lambda$. Given an SRH tabloid $T$, we denote by $\kappa_T$ the \textit{content} of $T$, which is the composition given by the rim hook lengths read from the bottom rim hook to the top rim hook of the diagram.

 \begin{example}
We portray below all the possible SRH tabloids of shape $\lambda=(4,2,2)$, that is, all elements of the set $\mathcal{T}_{(4,2,2)}$.

\begin{figure}[H]
\centering
\begin{tikzpicture}[
    dot/.style={circle, fill=black, inner sep=1pt},
    cellcircle/.style={draw, fill=black, circle, inner sep=0pt, minimum size=0.2cm},
    line width=0.4pt
]

\def\cell{0.55}   

\newcommand{\srhshape}{
    \draw (0,0) rectangle (4*\cell,-\cell);
    \draw (0,-\cell) rectangle (2*\cell,-2*\cell);
    \draw (0,-2*\cell) rectangle (2*\cell,-3*\cell);

    \foreach \x in {1,2,3}
        \draw (\x*\cell,0) -- (\x*\cell,-\cell);

    \draw (\cell,-\cell) -- (\cell,-3*\cell);

    \foreach \x in {0,...,3}
        \node[cellcircle] at ({(\x+0.5)*\cell},{-0.5*\cell}) {};

    \foreach \x in {0,1}
        \node[cellcircle] at ({(\x+0.5)*\cell},{-1.5*\cell}) {};

    \foreach \x in {0,1}
        \node[cellcircle] at ({(\x+0.5)*\cell},{-2.5*\cell}) {};
}

\newcommand{\srhtabloid}[3]{%
\begin{scope}[shift={(#1,#2)}]
    \srhshape

    \coordinate (11) at (0.5*\cell,-0.5*\cell);
    \coordinate (12) at (1.5*\cell,-0.5*\cell);
    \coordinate (13) at (2.5*\cell,-0.5*\cell);
    \coordinate (14) at (3.5*\cell,-0.5*\cell);
    \coordinate (21) at (0.5*\cell,-1.5*\cell);
    \coordinate (22) at (1.5*\cell,-1.5*\cell);
    \coordinate (31) at (0.5*\cell,-2.5*\cell);
    \coordinate (32) at (1.5*\cell,-2.5*\cell);

    \node[dot] at (11) {};
    \node[dot] at (12) {};
    \node[dot] at (13) {};
    \node[dot] at (14) {};
    \node[dot] at (21) {};
    \node[dot] at (22) {};
    \node[dot] at (31) {};
    \node[dot] at (32) {};

    #3
\end{scope}
}

\def\dx{3.2}

\srhtabloid{0}{0}{
    \draw (11)--(12)--(13)--(14);
    \draw (21)--(22);
    \draw (31)--(32);
}

\srhtabloid{\dx}{0}{
    \draw (21)--(22)--(12)--(13)--(14);
    \draw (31)--(32);
}

\srhtabloid{2*\dx}{0}{
    \draw (11)--(12)--(13)--(14);
    \draw (31)--(32)--(22);
}

\srhtabloid{3*\dx}{0}{
    \draw (21)--(11)--(12)--(13)--(14);
    \draw (31)--(32)--(22);
}

\srhtabloid{4*\dx}{0}{
    \draw (31)--(32)--(22)--(12)--(13)--(14);
}

\srhtabloid{2*\dx}{-2.5}{
    \draw (11)--(21);
    \draw (31)--(32)--(22)--(12)--(13)--(14);
}

\end{tikzpicture}
\end{figure}

Counting the parity of the number of $N$-steps, we find the second, third, and sixth tabloids have negative sign whereas the rest of the tabloids have positive sign. To further illustrate notation, we also remark that the fourth tabloid consists of an $EN$-hook and an $NEEE$-hook.

 \end{example}

  Considering SRH tabloids and semi-ordered stable partitions allows us to state the following result from \cite[Theorem 3.1]{WW2020}.
 
 \begin{theorem}\cite{WW2020}
 \label{thm:WW}
 For any graph $G$ and any partition $\lambda$ of $|V(G)|$, we have
 \begin{align*}
     [s_{\lambda}]X_G=\sum_{T\in \mathcal{T}_{\lambda}}\mathrm{sgn}(T)so_G(T),
 \end{align*}
 where $so_G(T)$ denotes the number of semi-ordered stable partitions of $G$ of type $\Lambda(\kappa_T)$.
 \end{theorem}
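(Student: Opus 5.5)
Theorem~\ref{thm:WW} is quoted from \cite{WW2020}, so the plan is to recover it by combining two classical expansions.

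\emph{Step 1: the $e$-style expansion of $X_G$.} Every proper coloring of $G$ induces a stable partition of $V(G)$ into its color classes. Grouping colorings by this partition gives
\[
X_G=\sum_{\pi}\tilde m_{\mathrm{type}(\pi)},
\]
where $\pi$ ranges over stable partitions of $G$. Here $\tilde m_\mu$ is the augmented monomial symmetric function, the sum over injective assignments of distinct colors to the blocks. Write $\tilde m_\mu=r_\mu m_\mu$, where $r_\mu=\prod_i m_i(\mu)!$ and $m_i(\mu)$ is the multiplicity of $i$ in $\mu$. Then
\[
X_G=\sum_\mu \bigl(\#\{\text{stable partitions of type }\mu\}\bigr)\, r_\mu\, m_\mu .
\]
The factor $r_\mu$ is exactly the number of ways to order the blocks of equal size. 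So the coefficient of $m_\mu$ is the number of semi-ordered stable partitions of type $\mu$, namely $so_G(\mu)$, and
\[
X_G=\sum_\mu so_G(\mu)\, m_\mu .
\]

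\emph{Step 2: extract Schur coefficients.} Since $\langle m_\mu,h_\nu\rangle=\delta_{\mu\nu}$ under the Hall inner product, we get
\[
[s_\lambda]X_G=\langle X_G,s_\lambda\rangle=\sum_\mu so_G(\mu)\,[h_\mu]s_\lambda .
\]
It therefore remains to expand $s_\lambda$ in the $h$ basis. This is the Jacobi--Trudi determinant $s_\lambda=\det(h_{\lambda_i-i+j})$. By the Eğecioğlu--Remmel interpretation, the inverse Kostka matrix gives
\[
s_\lambda=\sum_{T\in\mathcal T_\lambda}\mathrm{sgn}(T)\,h_{\Lambda(\kappa_T)},
\]
where the sign of a special rim hook is $(-1)^{\text{height}}$. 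The height of a hook is its number of north steps, so $\mathrm{sgn}(T)$ matches the product of the hook signs.

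\emph{Step 3: combine.} Substituting the expansion from Step 2 into the pairing and collecting terms by $\Lambda(\kappa_T)$ yields the stated formula.

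The main obstacle is Step 2, that is, justifying the special rim hook expansion. To prove it I would expand the Jacobi--Trudi determinant as a sum over permutations $\sigma$. The term for $\sigma$ corresponds to choosing, for each row $i$, a hook that starts in the first column of row $i$ and has length $\lambda_i-i+\sigma(i)$. The nonzero terms correspond bijectively to special rim hook tabloids, via the standard peeling of hooks from the bottom. The sign of $\sigma$ equals $\prod(-1)^{\text{height}}$, because each hook spanning $k$ rows contributes a $k$-cycle structure. Terms indexed by the same tabloid must also be checked to match exactly. Since this is a classical result, I would cite Eğecioğlu--Remmel rather than reprove it.
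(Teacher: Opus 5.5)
Your argument is correct. The paper gives no proof of this statement; it quotes it from \cite{WW2020}. Your chain of steps is the standard derivation of that formula, so there is no competing argument to compare against. The chain is: Stanley's expansion $X_G=\sum_\pi \tilde m_{\mathrm{type}(\pi)}$, where $\tilde m_\mu=r_\mu m_\mu$ accounts for semi-ordering the stable partitions; then the duality $[s_\lambda]m_\mu=\langle m_\mu,s_\lambda\rangle=[h_\mu]s_\lambda$; then the E\u{g}ecio\u{g}lu--Remmel special rim hook expansion of $s_\lambda$ in the $h$-basis. One small point of terminology: what you call the ``$e$-style'' expansion in Step~1 is really the augmented monomial expansion, but this affects only the label.

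One caution about your aside sketching E\u{g}ecio\u{g}lu--Remmel: the description there is not right as stated. Take $\lambda=(2,2)$ and $\sigma=(1\,2)$. The Jacobi--Trudi term is $h_3h_1$, with $h_3$ coming from row~$1$. But no special rim hook of length $3$ starts in the first column of row~$1$. In the corresponding tabloid, the $3$-hook starts at cell $(2,1)$ and the $1$-hook is the cell $(1,1)$. Also, $\sigma$ is a single transposition while the hooks span $2$ and $1$ rows, so the sign agreement does not come from a cycle-by-cycle correspondence.

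The clean mechanism is Laplace expansion along the last column. With $n=\ell(\lambda)$,
\[
s_\lambda=\sum_{j=1}^{n}(-1)^{n-j}\,h_{\lambda_j-j+n}\,s_{\lambda\setminus H_j}.
\]
Here $H_j$ is the unique rim hook running from the bottom cell $(n,1)$ of the first column to the last cell of row $j$; it has length $\lambda_j-j+n$ and height $n-j$. The complementary minor is exactly the Jacobi--Trudi matrix of $\lambda\setminus H_j=(\lambda_1,\ldots,\lambda_{j-1},\lambda_{j+1}-1,\ldots,\lambda_n-1)$. Every special rim hook removable from $\lambda$ must contain $(n,1)$, and so is one of these $H_j$. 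Iterating the expansion, with rows that become empty contributing only $h_0=1$, produces each special rim hook tabloid exactly once, with sign $\mathrm{sgn}(T)$.

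Since you cite the result rather than rely on the sketch, none of this affects the validity of your proof.
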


 In \cite{WW2020}, the formula is utilized to prove the non-Schur-positivity of subclasses of complete tripartite graphs, squid graphs, and pineapple graphs. In this work, we employ an alternative version of the above formula (Proposition \ref{prop:better_s_form}), which was proved in \cite[Corollary 3.1]{gen_nets}. 

Lastly, we define a graph $G$ to be \textit{claw-free} if $G$ has no induced subgraphs isomorphic to the claw graph $K_{(3,1)}$, which we depict below.

\begin{figure}[H]
\centering
\begin{tikzpicture}
\node[] at (-1,.75) {$K_{(3,1)}=$};

\draw[fill=black,scale=.75] (0,1) circle (3pt);
\draw[fill=black,scale=.75] (1.2,1.5) circle (3pt);
\draw[fill=black,scale=.75] (1.2,1) circle (3pt);
\draw[fill=black,scale=.75] (1.2,.5) circle (3pt);

\draw[thick,scale=.75] (0,1) -- (1.2,1.5);
\draw[thick,scale=.75] (0,1) -- (1.2,1);
\draw[thick,scale=.75] (0,1) -- (1.2,.5);
\end{tikzpicture}
\end{figure}

Much of the work surrounding Schur-positivity is motivated by the open conjecture that all claw-free graphs are Schur-positive \cite{stan_98}. It was shown in \cite{gash_96} that all claw-free incomparability graphs satisfy this conjecture. The stronger statement that all claw-free incomparability graphs are $\mathbf{e}$-positive, i.e. the Stanley-Stembridge Conjecture, was proved in \cite{hikita24}.

\section{Special Rim Hook $G$-Tabloids}
\label{sect:srh_G_tabs}

  In this section, we define an SRH $G$-tabloid, which is a modified version of an SRH tabloid that corresponds to a graph $G$. We then state a formula for all Schur coefficients of chromatic symmetric functions in terms of signed sums of these combinatorial objects, which was proved in \cite[Corollary 3.1]{gen_nets}.

 \begin{definition}
 \label{def:SRH_G_tabs}
 Consider a graph $G$ and a partial order $\le$ on the vertices of $G$ satisfying: if vertices $u$ and $v$ are non-adjacent, then $u$ and $v$ are comparable. We can then define a \textit{special rim hook $G$-tabloid} (or SRH $G$-tabloid) to be an SRH tabloid such that every cell is filled with a vertex of $G$ and the following conditions are met.
\begin{itemize}
    \item Cells spanned by the same rim hook contain vertices which form a stable set.
    \item For each rim hook, reading the corresponding vertices from southwest to northeast order results in an increasing sequence with respect to the partial order.
\end{itemize}
The \textit{sign} and \textit{shape} of an SRH $G$-tabloid are respectively the sign and shape of the underlying SRH tabloid.
We denote the set of all SRH $G$-tabloids of shape $\lambda$ by $\mathcal{T}_{\lambda,G}$.
 \end{definition}


\begin{definition}
\label{def:tail_head}
Let $T$ be an SRH $G$-tabloid. We define the \textit{tail} of $T$, denoted \text{tl}$(T)$, to be the part of $T$ containing all rows of length $1$. Likewise, we define the \textit{head} of $T$, denoted $\text{hd}(T)$, to be the part of $T$ containing all rows of length strictly greater than $1$. Either tl$(T)$ or hd$(T)$ could be empty, depending on the shape of $T$. 
\end{definition}

Let $T$ and $T'$ be SRH $G$-tabloids. We say that
\[
\text{tl}(T)=\text{tl}(T') \orr \text{hd}(T)=\text{hd}(T')
\]
if the tails (respectively, heads) of $T$ and $T'$ are equal as SRH $G'$-tabloids, where $G'$ is the induced subgraph of $G$ on the set of vertices appearing in the tails (respectively, heads).


 The choice of a partial order introduces flexibility when working with SRH $G$-tabloids. When $G$ is the incomparability graph of a poset $P$, a natural approach is to adopt the partial order defined by the poset itself. In an incomparability graph, two vertices are non-adjacent precisely when they are comparable. This ensures that the necessary condition is automatically met. Otherwise, if $G$ is not an incomparability graph, the simplest approach is to choose a total order on $G$ by labeling the vertices numerically.

\begin{example}
\label{ex:SRH_G_tab}

Consider the poset $P$ depicted in Example \ref{ex:hasse} and the corresponding incomparability graph $G=\text{inc}(P)$ depicted in Example \ref{ex:inc_graph}. We depict below some SRH $G$-tabloids of shape $(2,1^4)$, that is, some elements of the set $\mathcal{T}_{(2,1^4),G}$. The heads of the leftmost and rightmost are equal as SRH $G'$-tabloids for the subgraph $G'$ of $G$ consisting of the vertices $a$ and $c$.


\begin{figure}[H]
\centering
\begin{tikzpicture}[
    line width=0.4pt
]

\def\cell{0.55}

\newcommand{\gtabloidshape}{
    \draw (0,0) rectangle (2*\cell,-\cell);
    \draw (\cell,0) -- (\cell,-\cell);

    \draw (0,-\cell) rectangle (\cell,-2*\cell);
    \draw (0,-2*\cell) rectangle (\cell,-3*\cell);
    \draw (0,-3*\cell) rectangle (\cell,-4*\cell);
    \draw (0,-4*\cell) rectangle (\cell,-5*\cell);
}

\newcommand{\gtabloid}[9]{%
\begin{scope}[shift={(#1,#2)}]
    \gtabloidshape

    \node at (0.5*\cell,-0.5*\cell) {$#3$};   
    \node at (1.5*\cell,-0.5*\cell) {$#4$};   
    \node at (0.5*\cell,-1.5*\cell) {$#5$};   
    \node at (0.5*\cell,-2.5*\cell) {$#6$};   
    \node at (0.5*\cell,-3.5*\cell) {$#7$};   
    \node at (0.5*\cell,-4.5*\cell) {$#8$};   

    #9
\end{scope}
}

\def\dx{2.8}

\gtabloid{0}{0}{a}{c}{d}{f}{b}{e}{
    \draw[thick] (0.5*\cell,-3.2*\cell) -- (0.5*\cell,-2.8*\cell);
    \draw[thick] (0.8*\cell,-0.5*\cell) -- (1.2*\cell,-0.5*\cell);
}

\gtabloid{\dx}{0}{b}{f}{d}{c}{a}{e}{
    \draw[thick] (0.5*\cell,-3.2*\cell) -- (0.5*\cell,-2.8*\cell);
    \draw[thick] (0.8*\cell,-0.5*\cell) -- (1.2*\cell,-0.5*\cell);
}

\gtabloid{2*\dx}{0}{b}{e}{a}{d}{f}{c}{
    \draw[thick] (0.5*\cell,-0.8*\cell) -- (0.5*\cell,-1.2*\cell);
    \draw[thick] (0.8*\cell,-0.5*\cell) -- (1.2*\cell,-0.5*\cell);
}


\gtabloid{3*\dx}{0}{a}{c}{f}{b}{e}{d}{
    \draw[thick] (0.5*\cell,-1.8*\cell) -- (0.5*\cell,-2.2*\cell);
    \draw[thick] (0.8*\cell,-0.5*\cell) -- (1.2*\cell,-0.5*\cell);
}

\end{tikzpicture}
\end{figure}

\end{example}

By considering SRH $G$-tabloids, we may obtain an alternative combinatorial interpretation of Schur coefficients, as is done in \cite[Corollary 3.1]{gen_nets}.

\begin{proposition}[\cite{gen_nets}]
\label{prop:better_s_form}
Consider any graph $G$, a partition $\lambda$ of $|V(G)|$, and a partial order on the vertices of $G$ such that non-adjacent vertices are comparable. We have
\[
[s_{\lambda}]X_G=\sum_{T\in \mathcal{T}_{\lambda,G}}\mathrm{sgn}(T).
\]
\end{proposition}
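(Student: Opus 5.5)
We derive Proposition~\ref{prop:better_s_form} from Theorem~\ref{thm:WW} by regrouping the sum. For each $T\in\mathcal{T}_\lambda$ we show that $so_G(T)$ equals the number of SRH $G$-tabloids whose underlying SRH tabloid is $T$. Every such $G$-tabloid has sign $\mathrm{sgn}(T)$, so summing over $T$ then gives
\[
\sum_{T\in\mathcal{T}_\lambda}\mathrm{sgn}(T)\,so_G(T)=\sum_{T'\in\mathcal{T}_{\lambda,G}}\mathrm{sgn}(T').
\]

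Fix $T$ with content $\kappa_T=[\kappa_1,\ldots,\kappa_j]$, and let $r_1,\ldots,r_j$ be its rim hooks, with $|r_i|=\kappa_i$. Given a semi-ordered stable partition of type $\Lambda(\kappa_T)$, assign its stable sets to the rim hooks as follows. For each size $m$, the rim hooks of length $m$ occur in $T$ in a fixed order, from bottom to top. The stable sets of size $m$ are also ordered, by the semi-ordering. Match these two orders: the $t$-th stable set of size $m$ goes to the $t$-th rim hook of length $m$.

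Next, fill each rim hook $r_i$ with the vertices of its assigned stable set $S_i$. Any two vertices of $S_i$ are non-adjacent, so by hypothesis they are comparable in the chosen partial order. Hence $S_i$ is a chain, and it has exactly one increasing arrangement. Placing it along $r_i$ from southwest to northeast is the unique filling that meets the second condition of Definition~\ref{def:SRH_G_tabs}. The first condition holds automatically, since $S_i$ is stable. This gives a map from semi-ordered stable partitions of type $\Lambda(\kappa_T)$ to SRH $G$-tabloids with underlying tabloid $T$.

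For the inverse, start from such a $G$-tabloid. The vertex sets of its rim hooks form a stable partition of type $\Lambda(\kappa_T)$. Order the parts of equal size by the bottom-to-top order of their rim hooks in $T$; this makes the partition semi-ordered. The two maps are clearly mutually inverse, so the count is exactly $so_G(T)$.

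The step that needs the most care is checking that the semi-ordering is matched canonically with the positions of the rim hooks. Semi-ordering fixes an order only among stable sets of the same size, and this is exactly the freedom among rim hooks of equal length in $T$, whose relative order is fixed by $T$. So no overcounting or undercounting factor such as $\prod m_i!$ arises. It also matters that comparability of non-adjacent vertices is exactly what makes the filling of each hook unique.
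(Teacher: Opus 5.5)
Your proof is correct: the paper does not prove this statement itself but quotes it from \cite[Corollary 3.1]{gen_nets} as an alternative version of Theorem~\ref{thm:WW}, and your regrouping is the direct derivation from that theorem, namely a bijection between semi-ordered stable partitions of type $\Lambda(\kappa_T)$ and vertex fillings of the fixed tabloid $T$, where the comparability hypothesis makes each stable set a chain with a unique increasing placement along its hook. The one point left implicit, both in your write-up and in the paper's Definition~\ref{def:SRH_G_tabs}, is that each vertex of $G$ fills exactly one cell; your inverse map uses this when it reads off the hook contents as a stable partition.
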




 \section{Complete Multipartite Graphs}
\label{sect:comp_multi_graphs}

 
 \begin{definition}
Let $\lambda=(\lambda_1,\ldots,\lambda_k)$ be a partition. A \textit{complete multipartite graph} $K_{\lambda}$ is a graph with $k$ stable sets of vertices of respective sizes $\lambda_1,\ldots,\lambda_k$ such that every possible edge between stable sets is included. 

\end{definition}

\begin{example} The complete multipartite graph $K_{(4,2,2)}$ is shown below.
\label{ex:comp_multi}

\begin{figure}[H]
\centering
\begin{tikzpicture}
\node[] at (-2,1.5) {$K_{(4,2,2)}=$};
\draw[fill=black,scale=.75] (-.5,0) circle (3pt);
\draw[fill=black,scale=.75] (-.5,.5) circle (3pt);
\draw[fill=black,scale=.75] (4,0) circle (3pt);
\draw[fill=black,scale=.75] (4,.5) circle (3pt);
\draw[fill=black,scale=.75] (1.5,3) circle (3pt);
\draw[fill=black,scale=.75] (1,3) circle (3pt);
\draw[fill=black,scale=.75] (2,3) circle (3pt);
\draw[fill=black,scale=.75
] (2.5,3) circle (3pt);
\draw[thick,scale=.75] (1,3) -- (4,.5);
\draw[thick,scale=.75] (1,3) -- (4,0);
\draw[thick,scale=.75] (1,3) -- (-.5,0);
\draw[thick,scale=.75] (1,3) -- (-.5,.5);
\draw[thick,scale=.75] (1.5,3) -- (4,.5);
\draw[thick,scale=.75] (1.5,3) -- (4,0);
\draw[thick,scale=.75] (1.5,3) -- (-.5,0);
\draw[thick,scale=.75] (1.5,3) -- (-.5,.5);
\draw[thick,scale=.75] (2,3) -- (4,.5);
\draw[thick,scale=.75] (2,3) -- (4,0);
\draw[thick,scale=.75] (2,3) -- (-.5,0);
\draw[thick,scale=.75] (2,3) -- (-.5,.5);
\draw[thick,scale=.75] (2.5,3) -- (4,.5);
\draw[thick,scale=.75] (2.5,3) -- (4,0);
\draw[thick,scale=.75] (2.5,3) -- (-.5,0);
\draw[thick,scale=.75] (2.5,3) -- (-.5,.5);
\draw[thick,scale=.75] (-.5,0) -- (4,.5);
\draw[thick,scale=.75] (-.5,0) -- (4,0);
\draw[thick,scale=.75] (-.5,.5) -- (4,.5);
\draw[thick,scale=.75] (-.5,.5) -- (4,0);
\end{tikzpicture}
\end{figure}
\end{example}

All complete multipartite graphs are incomparability graphs because $K_{\lambda}$ is precisely the incomparability graph of the poset consisting of disjoint chains of lengths $\lambda_1,\ldots,\lambda_k$. \emph{For the remainder of this paper, we will hence view $K_\lambda$ as the incomparability graph of this poset, where its partial order is inherited from the poset.} As can be observed in Example \ref{ex:comp_multi}, any complete multipartite graph $K_{\lambda}$ with $\lambda_1\ge 3$ and $k\ge 2$ contains an induced copy of the claw graph. Therefore, most complete multipartite graphs are neither covered by the resolved Stanley-Stembridge Conjecture \cite{hikita24} nor the open Claw-Free Conjecture \cite{stan_98}. In this section, we will show that these graphs are not Schur-positive for most choices of $\lambda$.

In \cite{WW2020}, a Schur-positivity classification is proved for complete bipartite and complete tripartite graphs. Specifically, \cite[Example 2.5 and Theorem 3.2]{WW2020} determine that the only Schur-positive complete bipartite graphs are
\[
K_{\lambda}\qquad \text{for}\qquad \lambda\in \{(1,1), (2,1), (2,2), (3,2)\},
\] while the only Schur-positive complete tripartite graphs are
\[
K_{\lambda}\qquad \text{for}\qquad\lambda\in\{(1,1,1), (2,1,1), (2,2,1), (2,2,2), (3,2,2)\}.
\]
We extend this result to classify all complete multipartite graphs in terms of Schur-positivity in Theorem \ref{thm:classif}. We do not consider complete multipartite graphs $K_{\lambda}$ with $\ell(\lambda)=1$ since these are trivially $\mathbf{e}$-positive as the union of $P_1$ graphs. We begin with the following definition, which characterizes a special type of partition.


\begin{definition}
A partition $\lambda=(\lambda_1,\ldots,\lambda_k)$ is \textit{balanced} if $\lambda_1\le \lambda_k+1$. 
\end{definition}

For example, the partitions $(5,5,5,4)$ and $(2,1,1,1)$ are balanced whereas $(3,2,1)$ and $(5,5,2,2)$ are not. The next proposition is a useful tool for showing graphs are not Schur-positive. 

\begin{proposition}
\label{prop:only_balanced_partitions}
Assume $G$ is Schur-positive and has a unique stable $k$-partition, which is of type $\lambda=(\lambda_1,\ldots,\lambda_k)$. Then, $\lambda$ is balanced.
\end{proposition}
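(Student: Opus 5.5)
Argue by contradiction using Proposition~\ref{prop:schur_pos_means_nice}. Suppose $G$ is Schur-positive, so $G$ is nice, and suppose its unique stable $k$-partition has type $\lambda=(\lambda_1,\ldots,\lambda_k)$ with $\lambda_1\ge \lambda_k+2$. Take $\mu$ obtained from $\lambda$ by moving one cell from the first part to the last. That is, $\mu$ is the weakly decreasing rearrangement of
\[
(\lambda_1-1,\lambda_2,\ldots,\lambda_{k-1},\lambda_k+1).
\]
Since $\lambda_1-1\ge \lambda_k+1$, this vector becomes a partition after sorting, and it still has exactly $k$ positive parts. Moving a unit from a larger part to a smaller part never increases any partial sum of the sorted sequence, so $\lambda\ge\mu$ in dominance order; this is the standard fact that such a transfer is a dominance-decreasing move.

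By niceness, $G$ then has a stable partition of type $\mu$. Since $\ell(\mu)=k$, this is a stable $k$-partition of $G$. By uniqueness, its type must equal $\lambda$. But $\mu\neq\lambda$, because the multiset of parts has changed: the sum of squares strictly drops, since
\[
(\lambda_1-1)^2+(\lambda_k+1)^2<\lambda_1^2+\lambda_k^2
\]
whenever $\lambda_1-\lambda_k\ge 2$. This contradiction shows $\lambda$ is balanced.

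The only step needing care is the dominance comparison $\lambda\ge\mu$ after re-sorting. I would verify it directly. Let $i$ be the first index with $\lambda_i=\lambda_1$ in the relevant block, and $j$ the last index with $\lambda_j=\lambda_k$. Sorting places the decreased entry at position $i'$, the last position equal to $\lambda_1$, and the increased entry at position $j'$, the first position equal to $\lambda_k$. Then the partial sums of $\mu$ equal those of $\lambda$ except that they are one less for indices in $[i',j'-1]$, which gives $\lambda\ge\mu$ immediately.
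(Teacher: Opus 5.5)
Your proof is correct and follows the paper's own argument. The paper also invokes Proposition~\ref{prop:schur_pos_means_nice} and lowers the last part equal to $\lambda_1$ by one while raising the first part equal to $\lambda_k$ by one, which is exactly your sorted $\mu$, to get a length-$k$ partition dominated by $\lambda$ that contradicts uniqueness. Your sum-of-squares check that $\mu\neq\lambda$ and your partial-sum verification of $\lambda\ge\mu$ spell out steps the paper leaves implicit.
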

\begin{proof}

Since $G$ is Schur-positive with a unique stable $k$-partition $\lambda$, there is no partition $\mu$ of length $k$ which is dominated by $\lambda$. Otherwise, by Proposition \ref{prop:schur_pos_means_nice}, $G$ would have a stable $k$-partition of type $\mu$ which is a contradiction to the uniqueness of $\lambda$.



Assume $\lambda$ is not balanced so $\lambda_1>\lambda_k+1$. Choose the largest $1\le j \le k-1$ such that $\lambda_j=\lambda_1$. Likewise, choose the smallest $2\le i \le k$ such that $\lambda_i=\lambda_k$. We then have
\begin{align*}
\lambda&=(\lambda_1,\ldots,\lambda_j,\lambda_{j+1},\ldots, \lambda_{i-1},\lambda_i,\ldots,\lambda_k)\\
&\ge (\lambda_1,\ldots,\lambda_j-1,\lambda_{j+1},\ldots,\lambda_{i-1},\lambda_{i}+1,\ldots,\lambda_k),
\end{align*}
which is a contradiction to the first sentence of the proof. Note that the latter is a valid partition since
\[
\lambda_1-1=\lambda_j-1 \ge \lambda_{j+1}\ge\cdots\ge \lambda_{i-1}\ge {\lambda_i+1}=\lambda_k+1.
\]
We conclude that $\lambda$ must be balanced.
\end{proof}



We can now show that only a special subclass of complete multipartite graphs have the potential to be Schur-positive.

\begin{corollary}
\label{cor:only_balanced_comp_graphs}
All complete multipartite graphs $K_{\lambda}$ for which $\lambda$ is not balanced are not Schur-positive and therefore not $\mathbf{e}$-positive. 
\end{corollary}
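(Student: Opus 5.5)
The plan is to apply Proposition~\ref{prop:only_balanced_partitions} directly to $G=K_\lambda$ with $k=\ell(\lambda)$. We may assume $k\ge 2$: if $k=1$, then $\lambda=(\lambda_1)$ satisfies $\lambda_1\le\lambda_1+1$, so it is automatically balanced and the claim is vacuous. The key step is to show that $K_\lambda$ has a unique stable $k$-partition, and that its type is $\lambda$.

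To see this, note that any stable set of $K_\lambda$ lies inside a single one of the $k$ defining parts $V_1,\ldots,V_k$. This holds because any two vertices from different parts are adjacent. Hence a stable partition of $K_\lambda$ into exactly $k$ stable sets must have each block contained in some $V_i$. Each nonempty $V_i$ needs at least one block, and there are only $k$ blocks, so each $V_i$ receives exactly one block, and that block is all of $V_i$. The partition $\{V_1,\ldots,V_k\}$ is therefore the unique stable $k$-partition, and its type is $\lambda$.

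Now suppose $K_\lambda$ were Schur-positive. Proposition~\ref{prop:only_balanced_partitions} would then force $\lambda$ to be balanced, contradicting the hypothesis. So $K_\lambda$ is not Schur-positive. Finally, $\mathbf{e}$-positivity implies Schur-positivity, as noted in Section~\ref{sect:prelims}, so $K_\lambda$ is not $\mathbf{e}$-positive either.

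The only substantive point is the uniqueness argument via the pigeonhole count; everything else is direct citation of earlier results.
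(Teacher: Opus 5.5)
Your proposal is correct and follows the paper's proof: you show that $K_\lambda$ has a unique stable $\ell(\lambda)$-partition, of type $\lambda$, by a pigeonhole count, then invoke Proposition~\ref{prop:only_balanced_partitions} and use that $\mathbf{e}$-positivity implies Schur-positivity. Your uniqueness step (every stable set lies in one part, so the $k$ blocks map onto the $k$ parts bijectively) is only a slightly cleaner phrasing of the paper's count that splitting one part leaves $k-1$ parts for at most $k-2$ blocks.
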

\begin{proof}
Complete multipartite graphs $K_{\lambda}$ with $\ell (\lambda)=k$ have a unique stable $k$-partition of type $\lambda$, obtained by dividing the vertices into the $k$ stable $\lambda_i$-sets given by the definition of $K_\lambda$. This is because if any of these $\lambda_i$-sets is separated into two or more stable sets, there are $k-1$ stable $\lambda_i$-sets leftover to divide into at most $k-2$ stable sets. This is impossible since $K_{\lambda}$ includes  all possible edges between the stable $\lambda_i$-sets. Hence, if $K_{\lambda}$ is Schur-positive, then $\lambda$ is balanced by Proposition \ref{prop:only_balanced_partitions}, and we are done.
\end{proof}

To illustrate the last two results, we consider an example of a complete multipartite graph which is not Schur-positive.

\begin{example}
For the sake of contradiction, assume that the graph $G=K_{(5,5,5,4,3,3)}$ is Schur-positive. We note that the partition $(5,5,5,4,3,3)$ is not balanced. We then have
\[
(5,5,5,4,3,3)\ge (5,5,4,4,4,3).
\]
Since $G$ has a stable partition of type $(5,5,5,4,3,3)$, $G$  also has a stable partition of type $(5,5,4,4,4,3)$ by Proposition \ref{prop:schur_pos_means_nice}. However, no such stable partition exists so we have a contradiction, and $K_{(5,5,5,4,3,3)}$ is not Schur-positive.
\end{example}

Now that we have reduced the cases to consider to only complete multipartite graphs $K_{\lambda}$ where $\lambda$ is balanced, we use a similar argument to reduce the cases even further.

\begin{theorem}
\label{thm:only_K32_may_b_s_pos}
If $K_{\lambda}$ is a complete multipartite graph with $\ell(\lambda)\ge 2$, then $K_{\lambda}$ is $\mathbf{e}$-positive, and hence Schur-positive, if $\lambda_i\in\{1,2\}$. Otherwise, if $\lambda$ is not $(3,2^{\beta})$ for  some $\beta\ge 1$, then $K_{\lambda}$ is not Schur-positive. 
\end{theorem}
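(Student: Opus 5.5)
The plan has two parts: a positive part for $\lambda_i\in\{1,2\}$, and a negative part that reduces, via Corollary~\ref{cor:only_balanced_comp_graphs}, to a few balanced families.

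\emph{Positive part.} Suppose $\lambda_i\in\{1,2\}$ for all $i$. By the convention fixed above, $K_\lambda$ is the incomparability graph of a disjoint union of chains, so it is an incomparability graph. It is also claw-free. Indeed, an induced claw needs three pairwise non-adjacent vertices, and in $K_\lambda$ non-adjacent vertices lie in the same part, so a claw needs a part of size at least $3$. Hence $K_\lambda$ is a claw-free incomparability graph, and the Stanley--Stembridge theorem \cite{hikita24} gives $\mathbf{e}$-positivity. Schur-positivity then follows because each $e_\mu$ is Schur-positive.

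\emph{Negative part: reduction.} Now assume some $\lambda_i\ge 3$, so $m:=\lambda_1\ge 3$, and assume $\lambda\ne(3,2^\beta)$. By Corollary~\ref{cor:only_balanced_comp_graphs} we may assume $\lambda$ is balanced, so every part lies in $\{m,m-1\}$. The main tool is Proposition~\ref{prop:schur_pos_means_nice}, together with the following observation. A stable set of $K_\lambda$ lies inside a single part. Hence a stable partition of type $\mu$ exists if and only if the parts of $\mu$ can be grouped into sub-multisets whose sums are exactly $\lambda_1,\dots,\lambda_k$. So it suffices to find $\mu\le\lambda$ that admits no such grouping.

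\emph{Case $m=3$ with at least two parts equal to $3$.} Then $\lambda=(3,3,\dots)$ with all remaining parts in $\{3,2\}$. Replace two of the $3$'s by $2,2,2$ to get $\mu$. Each partial sum of $\lambda$ is at least the corresponding partial sum of $\mu$, so $\mu\le\lambda$. In $\mu$ the number of parts equal to $3$ is smaller than in $\lambda$, and the remaining parts are all $2$'s. Some part $3$ of $\lambda$ would then have to be a sum of $2$'s, which is impossible. Hence $K_\lambda$ is not nice, so it is not Schur-positive. The cases $k=1$ and $\lambda=(3,2^\beta)$ are exactly those excluded.

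\emph{Case $m\ge 4$ (parts in $\{m,m-1\}$, all at least $3$).} In the same way, I will look for an unrealizable dominated $\mu$. A natural candidate replaces the smallest part $y$ by $(y-1,1)$ and simultaneously replaces some other part with an unrealizable split, as in $(3,3,3)\ge(3,2,2,2)$ and $(4,4)\ge(3,3,2)$. This will not always work. For instance, one checks that $K_{(4,3)}$ is nice, since every $\mu\le(4,3)$ is realizable. For such shapes I will instead exhibit a negative Schur coefficient using Proposition~\ref{prop:better_s_form}, taking the chain order on each part. My first attempt would be a coefficient $[s_\nu]X_{K_\lambda}$ with $\nu$ a two-column or near-hook shape. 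For such $\nu$ the rim hooks meeting the first column have very restricted form, and each rim hook must be a chain read increasingly inside one part. This makes the signed count of SRH $K_\lambda$-tabloids tractable. I would then pair off tabloids by a sign-reversing involution, for example by toggling the first $N$-step in the tail, and count the unmatched negative ones.

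\emph{Main obstacle.} The obstacle is this last subcase: finding, uniformly over all balanced $\lambda$ with parts in $\{m,m-1\}$ and $m\ge 4$ (and the nice ones such as $(4,3)$), a single shape $\nu$ and an involution whose fixed points are provably net negative. The case analysis above handles many shapes by niceness, but the remaining nice shapes require this explicit coefficient computation. The natural guide is the bipartite and tripartite computations of \cite{WW2020}, generalized by adding extra parts.
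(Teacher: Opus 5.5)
Your positive part and your $m=3$ case are fine. Your reformulation of realizability is also a clean version of the paper's argument: a stable partition of type $\mu$ exists if and only if the parts of $\mu$ can be grouped into sub-multisets with sums $\lambda_1,\dots,\lambda_k$. The gap is the entire case $m\ge 4$. There you never exhibit a dominated unrealizable $\mu$ or a negative Schur coefficient, and you end by calling a uniform ``shape $\nu$ plus involution'' argument the main obstacle. As written, nothing in the proposal rules out, for instance, $K_{(5,4,4)}$, $K_{(5,5,4)}$ or $K_{(4,3)}$.

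The missing idea is that niceness already handles every balanced shape except the bipartite ones, using the same trick as your $m=3$ case.

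If $\lambda=(m^\alpha,(m-1)^\beta)$ with $\alpha\ge 2$, take $\mu=(m^{\alpha-2},(m-1)^{\beta+2},2)\le\lambda$. Since $m\ge 3$ is not a sum of parts equal to $m-1$ together with at most one $2$, each of the $\alpha$ blocks of size $m$ needs its own part $m$. But $\mu$ has only $\alpha-2$ such parts.

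If $\lambda=(m,(m-1)^\beta)$ with $\beta\ge 2$ and $m\ge 4$, take $\mu=(m,(m-1)^{\beta-2},(m-2)^2,2)\le\lambda$. The part $m$ must fill the unique $m$-block. Moreover, $m-1$ is not a sum of parts from $\{m-2,m-2,2\}$ when $m\ge 4$. So each of the $\beta$ blocks of size $m-1$ needs a part equal to $m-1$, of which $\mu$ has only $\beta-2$.

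What remains is $\lambda=(m,m-1)$, a complete bipartite graph. The paper settles it by citing the bipartite classification of \cite{WW2020}, so no uniform involution over all balanced $\lambda$ is needed.

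If you want to stay self-contained, note that for $m\ge 5$ even $(m,m-1)$ fails niceness, via $\mu=(m-2,m-2,3)$. So only $K_{(4,3)}$ genuinely needs an explicit coefficient. There, a direct SRH tabloid computation gives $[s_{(2,2,2,1)}]X_{K_{(4,3)}}=-3$, so your two-column idea works as a single finite check.
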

\begin{proof}
By Corollary \ref{cor:only_balanced_comp_graphs}, it suffices to consider only  the graphs $K_{\lambda}$ for which $\lambda$ is balanced.
\begin{enumerate}
    \item Firstly, assume $\lambda_i\in\{1,2\}$ for all $i$. We have that $K_{\lambda}$ is $K_{(3,1)}$-free since $K_{\lambda}$ has no stable $3$-set. Thus, $K_{\lambda}$ is $\mathbf{e}$-positive because it is a claw-free incomparability graph \cite{hikita24}, and hence Schur-positive.

    
    
    \item Secondly, assume $\lambda=(m^{\alpha},(m-1)^{\beta})$ for $\alpha\ge 2$, $\beta\ge 0$, $m\ge 3$. We then have
    \[
    (m^{\alpha},(m-1)^{\beta})\ge (m^{\alpha-2},(m-1)^{\beta+2},2).
    \]
However, $K_{\lambda}$ does not have a stable partition of type $(m^{\alpha-2},(m-1)^{\beta+2},2)$. To see this, assume otherwise. Then the $\alpha-2$ $m$-parts must be $\alpha -2$ of the $\alpha$ stable $m$-sets. 
We then have two stable $m$-sets leftover, which must each be split into different parts in the stable partition. If one of them yields an $(m-1)$-part, there is only one vertex leftover, which cannot be part of any other stable set of the desired size as it is adjacent to all other vertices. If one of them yields the $2$-part, the remaining $m-2$ vertices cannot be part of any other stable set of the desired size as they are adjacent to all other vertices. Hence, $K_{\lambda}$ is not Schur-positive by Proposition \ref{prop:schur_pos_means_nice}.

\item Lastly, assume $\lambda=(m,(m-1)^{\beta})$ for $\beta \ge 2, m\ge 4$. We then have
\[
(m,(m-1)^{\beta})\ge (m,(m-1)^{\beta-2},(m-2)^2,2).
\]
However, $K_{\lambda}$ does not have a stable partition of type $(m,(m-1)^{\beta-2},(m-2)^2,2)$. To see this, assume otherwise. Then the $m$-part must be the unique stable $m$-set and the $\beta-2$ $(m-1)$-parts must be $\beta -2$ of the $\beta$ stable $(m-1)$-sets. We then have two $(m-1)$-sets leftover, which must each be split into different parts in the stable partition. If one of them yields an $(m-2)$-part, there is only one vertex leftover, which cannot be part of any other stable set of the desired size as it is adjacent to all other vertices. If one of them yields the $2$-part, the remaining $m-3$ vertices cannot be part of any other stable set of the desired size as they are adjacent to all other vertices. Hence, $K_{\lambda}$ is not Schur-positive by Proposition \ref{prop:schur_pos_means_nice}.
\end{enumerate}

The case where $\lambda=(m,m-1)$ is covered by the classification of complete bipartite graphs in \cite[Example 2.5]{WW2020}. The only general case which remains is when $\lambda=(3,2^{\beta})$, $\beta\ge 2$, and we are done.
\end{proof}

To illustrate the last result, we provide two examples of complete multipartite graphs which are not Schur-positive. 
\begin{example}
For sake of contradiction, assume the graph $G=K_{(6,6,5,5,5)}$ is Schur-positive. This graph falls under the second case in the prior proof. We have
\[
(6,6,5,5,5)\ge (5,5,5,5,5,2).
\]
Since $G$ has a stable partition of type $(6,6,5,5,5)$, $G$ also has a stable partition of type $(5,5,5,5,5,2)$ by Proposition \ref{prop:schur_pos_means_nice}. However, no such stable partition exists.

Next, for sake of contradiction, assume the graph $G=K_{(5,4,4,4)}$ is Schur-positive.
This graph falls under the third case in the prior proof. We have
\[
(5,4,4,4)\ge (5,4,3,3,2).
\]
Since $G$ has a stable partition of type $(5,4,4,4)$, $G$  also has a stable partition of type $(5,4,3,3,2)$ by Proposition \ref{prop:schur_pos_means_nice}. However, no such stable partition exists.

In each case we have a contradiction, and hence neither $K_{(6,6,5,5,5)}$ nor $K_{(5,4,4,4)}$ are Schur-positive.

\end{example}



We next prove Theorem \ref{thm:inv_paths_tail}, which significantly reduces the number of SRH $G$-tabloids which must be counted to compute the Schur coefficients of the chromatic symmetric function of an incomparability graph of $G$. For this we need two more definitions.

\begin{definition}
Let $T$ be an SRH $G$-tabloid. We define the \textit{tail sequence of $T$}, denoted $\text{ts}(T)$, to be the sequence of vertices of $G$ in the rows of length $1$ of the tabloid $T$, read from bottom to top.
\end{definition}

\begin{definition}
Let $(P,\le)$ be a finite poset and $G=\text{inc}(P)$. A \textit{non-increasing sequence in $G$} is a sequence $(v_1,\ldots,v_k)$ of vertices of $G$ such that $v_i \nleq v_{i+1}$ for $1\le i \le k-1$. A \textit{spanning non-increasing sequence of $G$} is a non-increasing sequence in $G$ which includes every vertex of $G$ exactly once. Let $N_{\mathrm{sp}}(G)$ denote the number of spanning non-increasing sequences of $G$. We note that if $G$ has no vertices, then $N_{\mathrm{sp}}(G)=1$, representing the empty non-increasing sequence in $G$. 
\end{definition}

To illustrate the prior two definitions, we note that the tail sequences of the SRH $G$-tabloids in Example \ref{ex:SRH_G_tab} are respectively
\[
(e,b,f,d),\qquad (e,a,c,d),\qquad (c,f,d,a),\andd (d,e,b,f). 
\]
Among these, only $(c,f,d,a)$ is a non-increasing sequence in the underlying incomparability graph \steph{that was originally defined in Examples \ref{ex:hasse} and \ref{ex:inc_graph}.} An example of a spanning non-increasing sequence of this incomparability graph is the sequence $(f,e,b,c,a,d)$.

To make way for the following theorem, we now set the notation
\[
\tilde{\mathcal{T}}_{\lambda,G}=\{ T\in \mathcal{T}_{\lambda,G} \text{ such that } \text{ts}(T)\text{ is a non-increasing sequence in $G$}\},
\]
which may be used when $G$ is an incomparability graph.
\begin{theorem}
\label{thm:inv_paths_tail}
For any incomparability graph $G$, we have
\[
[s_{\lambda}]X_G=\sum_{T\in\tilde{\mathcal{T}}_{\lambda,G}}\mathrm{sgn}(T).
\]
\end{theorem}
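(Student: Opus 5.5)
Starting from Proposition~\ref{prop:better_s_form}, applied with the partial order of the poset $P$ for which $G=\text{inc}(P)$, we have $[s_\lambda]X_G=\sum_{T\in\mathcal{T}_{\lambda,G}}\mathrm{sgn}(T)$. It therefore suffices to build a sign-reversing involution $\iota$ on $\mathcal{T}_{\lambda,G}\setminus\tilde{\mathcal{T}}_{\lambda,G}$, i.e.\ on those SRH $G$-tabloids whose tail sequence has some index $i$ with $v_i\le v_{i+1}$ (strictly $<$, since the vertices are distinct).

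The tail is a single column of cells, one per row of length $1$. Inside it, each rim hook is a vertical run of consecutive cells: a sequence of $N$-steps, read bottom to top, whose vertices increase in $P$. A rim hook may also begin in the tail and continue up into the head. Since all steps in the tail are $N$-steps, two adjacent tail cells with $v_i<v_{i+1}$ (reading bottom to top) are in exactly one of two situations. Either they are joined by an $N$-step of a common hook, or they lie in different hooks and are separated by a break.

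To define $\iota$, take the smallest such index $i$, i.e.\ the lowest violating pair in the tail, and toggle the connection between cells $i$ and $i+1$.
\begin{itemize}
\item If the two cells are joined, cut the hook there into two hooks.
\item If they are separated, merge the hook ending at cell $i$ with the hook starting at cell $i+1$.
\end{itemize}
In the merge case, cell $i$ is the top of its hook in the tail, and that hook cannot extend into the head, because it ends at cell $i$. Cell $i+1$ begins its hook. We must check that the merged hook is still a valid special rim hook, that the stable set and increasing conditions still hold, and that the result is still an SRH tabloid. The following facts should give this.
\begin{itemize}
\item Stability holds because in an incomparability graph comparable vertices are non-adjacent. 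The merged hook is a chain in $P$: it is increasing below $v_i$, then $v_i<v_{i+1}$, then increasing above. A chain is pairwise comparable, hence a stable set.
\item Each tail hook meets the first column automatically, so the special condition is preserved.
\item Vertical segments of the first column can always be split or merged, and the result is a valid rim hook decomposition, since removal order is bottom-up in the column.
\end{itemize}
Toggling changes the number of $N$-steps by exactly one, so the sign flips. The tail sequence itself is unchanged, so the lowest violating index is the same after toggling, and $\iota$ is an involution. Fixed points are exactly the tabloids with no violation, namely $\tilde{\mathcal{T}}_{\lambda,G}$.

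I expect the main obstacle to be the boundary between tail and head. A hook passing through cell $i+1$ may continue into the head, and the cut or merge must respect the content ordering of the SRH tabloid. In particular, cutting must leave the upper piece as a legitimate rim hook after the lower pieces are removed. If that causes trouble, I would fall back on an earlier splitting description of the tabloid into $\text{hd}(T)$ and $\text{tl}(T)$ to argue that this is fine.

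A second point to verify is the reading convention. The increasing condition reads each hook southwest to northeast, which in the tail is bottom to top, matching $\text{ts}(T)$. So a step joining cells $i$ and $i+1$ within one hook forces $v_i<v_{i+1}$, which is consistent with the toggle.
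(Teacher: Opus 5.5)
Your proposal is correct and is essentially the paper's proof. The paper likewise fixes the tail sequence, takes the minimal $j$ with $v_j\le v_{j+1}$, and toggles the $N$-step between those two tail cells to get a sign-reversing involution, using transitivity of the poset to justify the merge. Your additional checks fill in details the paper leaves implicit: that the lower hook in the merge case ends at cell $i$, that merged or split first-column segments still form a valid SRH tabloid, and that the head/tail boundary causes no trouble because both toggled cells lie in the tail.
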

\begin{proof}
Consider any incomparability graph $G$ and partition $\lambda$. From Proposition \ref{prop:better_s_form}, we have
\[
[s_{\lambda}]X_G=\sum_{T\in\mathcal{T}_{\lambda,G}}\mathrm{sgn}(T).
\]
Suppose $\lambda$ has $k$ rows of length $1$. We denote by $S_k(G)$ the set of all sequences of distinct vertices of $G$ of length $k$. Moreover, we set
\[
\mathcal{T}_{\lambda,G,s}=\{T\in\mathcal{T}_{\lambda,G}\text{ such that }\text{ts}(T)=s\}.
\]
Hence, we have
\[
[s_{\lambda}]X_G=\sum_{s\in S_k(G)}\sum_{T\in\mathcal{T}_{\lambda,G,s}}\mathrm{sgn}(T).
\]
For any $s\in S_k(G)$ and $T\in \mathcal{T}_{\lambda,G,s}$, we set $j$ to be the minimal index $1\le j \le k-1$ such that $v_j\le v_{j+1}$ (if it exists). We let $e_j$ denote an $N$-step from $v_j$ to $v_{j+1}$.

Now, we define the map
\begin{align*}
\psi: \mathcal{T}_{\lambda,G,s} &\to \mathcal{T}_{\lambda,G,s}\\
T &\mapsto T\Delta e_j,
\end{align*}
where $T\Delta e_j$ is the SRH $G$-tabloid obtained by removing $e_j$ if $e_j$ is already included in $T$ and adding $e_j$ otherwise.

If $v_{j+1}$ is part of a rim hook, it must be of the form \steph{$v_{j+1}\le v_{j+2}\le\cdots\le v_{j+h}$} (read bottom to top). Thus, by transitivity of the underlying poset, \steph{$v_{j}\le v_{j+1}\le v_{j+2}\le\cdots\le v_{j+h}$} so $v_j$ may be added to the rim hook. Note that there are no $N$-steps below $v_j$ since that would contradict the minimality of $j$.

Since the sign of an SRH $G$-tabloid is given by the parity of the \steph{number of} $N$-steps, we have that $\psi$ is a sign-reversing involution for any set $\mathcal{T}_{\lambda,G,s}$ on which it is defined. Therefore, whenever there exists a minimal $j$ such that $v_j\le v_{j+1}$ in the sequence $s$, we have
\[
\sum_{T\in\mathcal{T}_{\lambda,G,s}}\mathrm{sgn}(T)=0.
\]
We are thus left with the sums ranging over $\mathcal{T}_{\lambda,G,s}$ for which there is no index $j$ such that $v_j\le v_{j+1}$ in the sequence $s$. These sequences are precisely the non-increasing sequences of length $k$ in $G$. We conclude the desired formula holds. 
\end{proof}

Now \steph{we} apply Theorem \ref{thm:inv_paths_tail} to derive \steph{an explicit combinatorial} non-negative formula for the Schur coefficients of the chromatic symmetric functions of complete multipartite graphs $K_{(2^{\beta})}$, $\beta\ge 1$. 
We note that this formula covers all Schur coefficients for the graphs $K_{(2^{\beta})}$. \steph{This is because} the largest stable set in $K_{(2^{\beta})}$ has $2$ vertices, \steph{so} it is impossible for an SRH $K_{(2^{\beta})}$-tabloid to have a rim hook of length $3$ or greater and therefore impossible to have a row of length $3$ or greater.


\begin{lemma}
\label{lemma:form_two_partite_case}
We have \steph{for $\beta \geq 1$}
\[
[s_{(2^C,1^D)}]X_{K_{(2^{\beta})}}=\frac{\beta !}{(\beta-C)!}\cdot N_{\mathrm{sp}}(K_{(2^{\beta-C})})
\]
for $C,D\ge 0$ such that $2C+D=2\beta$.
\end{lemma}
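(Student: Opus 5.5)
We apply Theorem \ref{thm:inv_paths_tail} with $G=K_{(2^{\beta})}$, viewed as the incomparability graph of the poset consisting of $\beta$ disjoint $2$-chains $a_i<b_i$ for $1\le i\le \beta$. We first describe the set $\tilde{\mathcal{T}}_{(2^C,1^D),G}$ explicitly.

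\textbf{Step 1: the shape of the underlying tabloid.} Since the largest stable set in $G$ has size $2$, every rim hook has length at most $2$. A special rim hook must meet the first column. In the shape $(2^C,1^D)$, the tail consists of the bottom $D$ rows, which are single cells. The first rim hook is removed from the bottom. A rim hook lying in the tail is either a single cell or a vertical $N$-domino spanning two tail rows. Once the tail is exhausted, the remaining shape is $(2^C)$. In it, a special rim hook of length at most $2$ must be the bottom row taken horizontally: an $N$-domino in the first column is impossible, since it would leave a cell in column $2$ outside any remaining valid shape, and a single cell in column $1$ is impossible as a rim hook of a rectangle with two columns. So the head consists of $C$ horizontal $E$-dominoes, one per row.

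An $E$-domino filled southwest to northeast must be an increasing stable pair, so each head row is $(a_i,b_i)$ for some $i$, with distinct indices across rows. This gives exactly $\beta!/(\beta-C)!$ ordered choices of $C$ chains for the head. All head steps are $E$-steps and contribute no sign.

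\textbf{Step 2: the tail.} The tail vertices are exactly the $2(\beta-C)$ vertices of the unused chains, which induce a copy of $K_{(2^{\beta-C})}$. By the definition of $\tilde{\mathcal T}$, the tail sequence must be non-increasing. An $N$-domino in the tail would join consecutive tail vertices $v_j\le v_{j+1}$, contradicting non-increasingness. Hence in $\tilde{\mathcal{T}}$ the tail consists only of single-cell rim hooks, and it contributes sign $+1$. Conversely, every non-increasing spanning sequence of the tail vertices gives a valid tabloid. Single cells are trivially stable and increasing, and the tail hooks are removed first, bottom to top, so each is a legitimate rim hook.

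\textbf{Step 3: assemble.} Each element of $\tilde{\mathcal T}_{(2^C,1^D),G}$ has sign $+1$. These elements are in bijection with pairs consisting of an injective assignment of chains to the head rows and a spanning non-increasing sequence of the remaining $K_{(2^{\beta-C})}$. This gives the count
\[
\frac{\beta!}{(\beta-C)!}\cdot N_{\mathrm{sp}}(K_{(2^{\beta-C})}).
\]
When $D=0$, so that $C=\beta$, this agrees with the convention $N_{\mathrm{sp}}=1$ for the empty graph.

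The main point needing care is Step 1: verifying, from the rim-hook removal order, that the head rows are forced to be horizontal dominoes and that no rim hook straddles the head and the tail. A straddling hook would be an $N$-domino from the top tail cell into the bottom head row's first cell. Removing it would leave a cell of the $2$-row, now in column $2$ with nothing to its left, so it is not a valid rim hook removal. A second point is checking that the non-increasing condition on the tail sequence compares only consecutive tail vertices, so the head does not interact with it.
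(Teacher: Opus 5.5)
Your proposal is correct and follows the paper's proof. Both apply Theorem~\ref{thm:inv_paths_tail}, show that the head consists of $C$ $E$-hooks filled by ordered stable $2$-sets (giving $\beta!/(\beta-C)!$), and note that the tail has no $N$-steps because its vertices form a non-increasing sequence (giving $N_{\mathrm{sp}}(K_{(2^{\beta-C})})$), so every counted tabloid is positive. Your Step 1 just spells out more carefully the shape argument that the paper states briefly: no vertical domino in the rectangle $(2^C)$, and no $N$-domino straddling the tail and the head.
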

\begin{proof}
Set $G=K_{(2^{\beta})}$. For any non-zero coefficient $[s_{(2^C,1^D)}]X_{G}$, we must have $2C+D=2\beta$ in order for the number of cells in the tabloid to match the number of vertices in $G$. For any $T\in \tilde{\mathcal{T}}_{(2^C,1^D),G}$, the head of the tabloid $T$ must only consist of $E$-hooks. This is because the largest stable set in $G$ contains two vertices, so there are no rim hooks of length $3$ in the diagram. In particular, this also implies there cannot be an $N$-step up from the top row of $\mathrm{tl}(T)$ to the bottom row of $\mathrm{hd}(T)$.

In order to fill $\mathrm{hd}(T)$, we must select $C$ stable $2$-sets from $G$ and order them. Since $G$ is made up of $\beta$ distinct stable $2$-sets to choose from, we have $\frac{\beta!}{(\beta-C)!}$ choices for this part of the diagram. In order to fill $\mathrm{tl}(T)$, we must choose a non-increasing sequence in $G$ with the remaining vertices of $G$ \steph{by Theorem~\ref{thm:inv_paths_tail}.} These vertices form the subgraph $K_{(2^{\beta-C})}$ so this yields $N_{\mathrm{sp}}(K_{(2^{\beta-C})})$ choices.

Next, we note there are no $N$-steps in $\mathrm{hd}(T)$. Likewise, there are no $N$-steps in $\mathrm{tl}(T)$ since the vertices in the tail form a non-increasing sequence in $G$. Accordingly, all SRH $G$-tabloids we have counted have positive sign. We conclude the desired formula holds.
\end{proof}

We  \steph{now} employ the formula in Lemma \ref{lemma:form_two_partite_case} to assist in explicitly characterizing all the Schur coefficients of the chromatic symmetric function of $K_{(3,2^{\beta})}$ in terms of non-increasing path counts. This will complete the Schur-positivity 
classification of complete multipartite graphs.

\begin{theorem}
\label{thm:K32_s_pos}
 We have that $K_{(3,2^{\beta})}$ is Schur-positive for $\beta\ge 1$. Moreover, we have
\begin{equation}
\label{eq:target_coeff_first}
[s_{(3,2^C,1^D)}]X_{K_{(3,2^{\beta})}}=[s_{(2^C,1^D)}]X_{K_{(2^{\beta})}}=\frac{\beta !}{(\beta-C)!}\cdot N_{\mathrm{sp}}(K_{(2^{\beta-C})})
\end{equation}
for $C,D\ge 0$ such that $2C+D=2\beta$. We also have
\begin{align}
\begin{split}
\label{eq:target_coeff_second}
[s_{(2^C,1^D)}]X_{K_{(3,2^{\beta})}}&=\frac{\beta !}{(\beta - C+1)!} \Big((\beta-C+1)\cdot N_{\mathrm{sp}}(K_{(3,2^{\beta-C})})-N_{\mathrm{sp}}(K_{(2^{\beta-C+1})})\\
&+(C+2)\cdot N_{\mathrm{sp}}(K_{(2^{\beta-C+1},1)})\Big)
\end{split}
\end{align}
for $C\ge 1,D\ge 1$ such that $2C+D=2\beta+3$. We note that if $\beta<C$, $N_{\mathrm{sp}}(K_{(3,2^{\beta-C})})$ is set to equal zero since the graph $K_{(3,2^{\beta-C})}$ does not exist. Lastly, we have
\begin{equation}
\label{eq:target_coeff_third}
[s_{1^{2\beta+3}}]X_{K_{(3,2^{\beta})}}=N_{\mathrm{sp}}(K_{(3,2^{\beta})}).
\end{equation}
\end{theorem}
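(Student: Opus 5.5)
We plan to compute every Schur coefficient of $X_{K_{(3,2^\beta)}}$ with Theorem~\ref{thm:inv_paths_tail}, viewing $G=K_{(3,2^\beta)}$ as the incomparability graph of a $3$-chain $a<b<c$ together with $\beta$ disjoint $2$-chains $x_i<y_i$. The largest stable set has size $3$, so in any SRH $G$-tabloid every rim hook has length at most $3$. Because rim hooks are special, no row can have length exceeding $3$. Moreover, a row of length $3$ can only be an $EE$-hook filled with $a,b,c$ in increasing order. Hence the only shapes with possibly nonzero coefficient are $(3,2^C,1^D)$, $(2^C,1^D)$ and $(1^{2\beta+3})$, and we treat the three formulas separately. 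Positivity will then follow once each right-hand side is shown to be non-negative.

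\emph{Shape $(3,2^C,1^D)$.} The top row must be a rim hook meeting the first column; a hook of length $3$ could alternatively run down into lower rows, but $\lambda_1=3$ forces that hook to occupy the whole top row when it is removed last. I would argue directly that the unique length-$3$ stable set $\{a,b,c\}$ fills the first row as an $E$-hook. If that row were split instead, some cell in it would hold part of a hook containing vertices from the $3$-chain, and a counting argument on the first column would give a contradiction. Once the top row is fixed, removing it leaves an SRH $K_{(2^\beta)}$-tabloid of shape $(2^C,1^D)$ with the same sign and the same tail condition. This gives the first equality of \eqref{eq:target_coeff_first}, and the second is Lemma~\ref{lemma:form_two_partite_case}.

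\emph{Shape $(1^{2\beta+3})$.} Every hook is vertical and lies in the tail. By Theorem~\ref{thm:inv_paths_tail}, only non-increasing spanning sequences survive, and these have no $N$-steps, since an $N$-step requires $v_j\le v_{j+1}$. So each counts $+1$, which gives \eqref{eq:target_coeff_third}.

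\emph{Shape $(2^C,1^D)$ with $C,D\ge1$: the main obstacle.} Here the head consists of two-cell rows, but hooks can now pass between rows. One possibility is a vertical $N$-hook inside a two-cell column region, and another is a $3$-hook, either $NE$ or $EN$ or an $NN$ hook spanning the tail top into the head, each containing $a,b,c$. Since a $3$-hook must contain exactly $\{a,b,c\}$, I would split the surviving tabloids by how $a,b,c$ are placed.
\begin{enumerate}
\item All three lie in the tail. The head is $C$ ordered $E$-hooks drawn from pairs $\{x_i,y_i\}$ or subsets of $\{a,b,c\}$ of size $2$; this last option forces the third chain vertex into the tail. This case should produce the term $(\beta-C+1)N_{\mathrm{sp}}(K_{(3,2^{\beta-C})})$ after grouping by the prefactor $\beta!/(\beta-C+1)!$.
\item A $2$-subset of $\{a,b,c\}$ forms a head row, leaving a single remaining chain vertex. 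This gives the $N_{\mathrm{sp}}(K_{(2^{\beta-C+1},1)})$ term, with a multiplicity to be determined by the choices of the row position and the subset.
\item A $3$-hook straddles the boundary, or sits as an $NE$ or $EN$ hook in the head. These carry a negative sign and should give $-N_{\mathrm{sp}}(K_{(2^{\beta-C+1})})$.
\end{enumerate}
Before counting, I would look for a further sign-reversing involution, analogous to $\psi$, that toggles the $N$-step inside an $NE$ versus $EN$ configuration of the chain vertices, so that the residual sets become directly countable. I expect the delicate bookkeeping to be getting the multiplicity $(C+2)$ and the single negative term exactly right, verified against small cases such as $\beta=1$.

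Finally, for positivity I would show $N_{\mathrm{sp}}(K_{(2^{m},1)})\ge N_{\mathrm{sp}}(K_{(2^{m})})$, with $m=\beta-C+1$. The natural injection inserts the isolated vertex at the front of a non-increasing sequence; this is always valid because that vertex is comparable to nothing. Since $C+2\ge1$, the third term dominates the negative one, so \eqref{eq:target_coeff_second} is non-negative and $K_{(3,2^\beta)}$ is Schur-positive.
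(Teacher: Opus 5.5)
Your framework matches the paper's, and those pieces are fine: Theorem~\ref{thm:inv_paths_tail}, the reduction of shape $(3,2^C,1^D)$ to Lemma~\ref{lemma:form_two_partite_case}, the direct count for $(1^{2\beta+3})$, and an injection giving $N_{\mathrm{sp}}(K_{(2^m,1)})\ge N_{\mathrm{sp}}(K_{(2^m)})$. The gap is that the heart of the theorem, Equation~\eqref{eq:target_coeff_second}, is never derived, and the bookkeeping you sketch for it is wrong. For shape $(2^C,1^D)$, several configurations you list as candidates cannot occur: a vertical $N$-hook in the head, an $NN$-hook reaching into the head, and an $NE$-hook inside the head. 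In each, some second-column cell of the head has no admissible special hook through it, since such a hook would need length at least $4$ or would be a second hook of length $3$. Exactly four families survive:
(i) no $N$-step and $a,b,c$ all in the tail, contributing $\frac{\beta!}{(\beta-C)!}N_{\mathrm{sp}}(K_{(3,2^{\beta-C})})$;
(ii) no $N$-step, with one chain vertex in the tail and the other two forming an $E$-hook in one of the $C$ head rows, contributing $+3C\frac{\beta!}{(\beta-C+1)!}N_{\mathrm{sp}}(K_{(2^{\beta-C+1},1)})$;
(iii) an $NE$-hook $a,b,c$ starting at the top tail cell, contributing $-\frac{\beta!}{(\beta-C+1)!}N_{\mathrm{sp}}(K_{(2^{\beta-C+1})})$;
(iv) an $EN$-hook $a,b,c$ in two consecutive head rows ($C-1$ positions), which forces a singleton hook directly above its first cell.
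You assign all negative tabloids to the $N_{\mathrm{sp}}(K_{(2^{\beta-C+1})})$ term, but family (iv) is not of that form. The singleton can be any of the $2(\beta-C+2)$ vertices of the unused $2$-chains, and its partner then lies in the tail as an isolated vertex. So family (iv) contributes $-2(C-1)\frac{\beta!}{(\beta-C+1)!}N_{\mathrm{sp}}(K_{(2^{\beta-C+1},1)})$. The multiplicity $C+2$ in the statement is precisely $3C-2(C-1)$. Your accounting would produce $3C$, which already fails for $\beta=1$, $\lambda=(2,2,1)$: it gives $6-1=5$ rather than the true coefficient $3$.

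The involution you contemplate does not remove the need for this count. Toggling between $NE$ and $EN$ configurations is not sign-reversing, since both carry exactly one $N$-step. Deleting the $N$-step does work for family (iii): it pairs (iii) with the tabloids in (ii) that have $a$ at the top of the tail and $b,c$ in the bottom head row. But deleting the $N$-step of an $EN$-hook leaves $c$ as a hook confined to the second column, which is not special, so family (iv) must be enumerated directly. Consequently your positivity conclusion, which simply reads off the sign pattern of the stated formula, is unsupported until that formula is proved. The extra negative family is weighted by $N_{\mathrm{sp}}(K_{(2^{\beta-C+1},1)})$, so non-negativity uses $3C-2(C-1)\ge 1$ together with your injection. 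A smaller point: in shape $(3,2^C,1^D)$ no counting argument on the first column is needed. The hook containing the third cell of the top row must reach the first column, so it has length $3$ and is the entire row.
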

\begin{proof}
Fix $G=K_{(3,2^{\beta})}$, $\beta\ge 1$. Denote the three vertices in the unique stable $3$-set by $v_1< v_2< v_3$, where the order is given by the underlying poset. We divide the coefficients $
[s_{\lambda}]X_{G}$ into several cases. \steph{Using Theorem \ref{thm:inv_paths_tail}, we need only consider tabloids in $\tilde{\mathcal{T}}_{\lambda,G}$ throughout.}
\begin{enumerate}
    \item Assume $\lambda=(3,2^C,1^D)$ for $C,D\ge 0$. We must have $2C+D=2\beta$ in order for the number of cells in the tabloids to match the number of vertices in $G$. For any $T\in \tilde{\mathcal{T}}_{\lambda,G}$, the row of length $3$ must be filled by the unique stable $3$-set in $G$. The remainder of the diagram must then be filled with the rest of the vertices, which form a subgraph $K_{(2^{\beta})}$. Accordingly, these tabloids are in sign-preserving bijection with the tabloids in $\tilde{\mathcal{T}}_{(2^{C},1^D),K_{(2^{\beta})}}$. Therefore, Equation \ref{eq:target_coeff_first} follows from Lemma \ref{lemma:form_two_partite_case}.

    \item Assume $\lambda=(2^C,1^D)$ for $C\ge 1$, $D\ge 1$. We must have $2C+D=2\beta+3$ in order for the number of cells in the tabloids to match the number of vertices in $G$. We also observe that $D\ge 1$ must hold since $G$ has an odd number of vertices. We consider several subcases for $T\in\tilde{\mathcal{T}}_{\lambda,G}$. 
    \begin{enumerate}[(a)]
    \item First, $T$ \steph{has} no $N$-steps. In this case, the rows of length $2$ are all filled with $E$-hooks. We have that $v_1,v_2,v_3$ cannot all be in the head since they are adjacent to all other vertices and thus \steph{cannot} be placed in two stable $2$-sets. It is also impossible for there to be exactly two of the three vertices $v_1,v_2,v_3$ in the tail since that would leave one of the vertices in the head with no other vertex to form a stable $2$-set. Thus, we break into two subcases.
    \begin{enumerate}[(i)]
        \item Assume $v_1,v_2,v_3$ are all in the tail. We choose $C$ ordered stable $2$-sets from the $\beta$ stable $2$-sets in $G$ to fill the head. Then, we choose a non-increasing sequence in $G$ through all the remaining vertices to fill the tail. This yields
        \[
        \frac{\beta !}{(\beta-C)!}\cdot N_{\mathrm{sp}}(K_{(3,2^{\beta-C})})
        \]
        positive SRH $G$-tabloids.

        
        
        \item Assume one vertex $v_j$ from the stable $3$-set is in the tail. We have three choices for the vertex $v_j$. We then choose $C-1$ ordered stable $2$-sets out of the $\beta$ total stable $2$-sets. Next, we choose a position for the $E$-hook with the other two vertices from the $3$-set, yielding $C$ possible choices. Finally, we choose a non-increasing sequence in $G$ through the remaining vertices. We thus count
        \[
        3\cdot \frac{\beta!}{(\beta-C+1)!}\cdot C \cdot N_{\mathrm{sp}}(K_{(2^{\beta-C+1},1)})
        \]
        positive $SRH$ $G$-tabloids. 
    \end{enumerate}
        \item Second, $T$ \steph{has} an $NE$-hook containing $v_1<v_2<v_3$ and starting from the top cell in the tail. The $N$-step in this rim hook must be the only $N$-step in the diagram since the rest of the head is filled with \steph{$E$-hooks} and the vertices in the tail compose a non-increasing sequence in $G$. We choose $C-1$ ordered stable $2$-sets out of the $\beta$ total stable $2$-sets to fill the rest of the head. Then, we choose a non-increasing sequence through the remaining vertices to fill the rest of the tail. We note that a non-increasing sequence followed by $v_1$ must still be a non-increasing sequence since $v_1$ is incomparable to all vertices besides $v_2$ and $v_3$. Hence, we count
        \[
        \frac{\beta !}{(\beta-C+1)!}\cdot N_{\mathrm{sp}}(K_{(2^{\beta-C+1})})
        \]
        negative SRH $G$-tabloids.
        \item Third, $T$ \steph{has} an $EN$-hook containing $v_1<v_2<v_3$ and located somewhere in the head. Once again, the $N$-step in this rim hook must be the only $N$-step in the diagram since the rest of the head is filled with $E$-hooks and the vertices in the tail compose a non-increasing sequence in $G$. There are $C-1$ positions for the $EN$-hook. We choose $C-2$ ordered stable $2$-sets from the $\beta$ total stable $2$-sets to fill the other rows in the head. Moreover, there must be a rim hook of length $1$ to the northwest of the \steph{$EN$-hook} to complete the diagram. There remain $2(\beta-C+2)$ vertices from which to choose this rim hook. Finally, we choose a non-increasing sequence through the remaining vertices. This yields
        \[
        (C-1)\cdot \frac{\beta !}{(\beta-C+2)!}\cdot 2\cdot (\beta-C+2)\cdot N_{\mathrm{sp}}(K_{(2^{\beta-C+1},1)})
        \]
        negative SRH $G$-tabloids. We note that this case only occurs if $C\ge 2$. Otherwise, the above term vanishes.
    \end{enumerate}
    To derive the desired formula, we now compute the sum of all the above terms \steph{for this case.}
    \begin{align*}
        [s_{\lambda}]X_{G}&=\frac{\beta !}{(\beta-C)!}\cdot N_{\mathrm{sp}}(K_{(3,2^{\beta-C})})+3\cdot \frac{\beta!}{(\beta-C+1)!}\cdot C \cdot N_{\mathrm{sp}}(K_{(2^{\beta-C+1},1)})\\
        &-\frac{\beta !}{(\beta-C+1)!}\cdot N_{\mathrm{sp}}(K_{(2^{\beta-C+1})})-(C-1)\cdot \frac{\beta !}{(\beta-C+1)!}\cdot 2\cdot N_{\mathrm{sp}}(K_{(2^{\beta-C+1},1)})\\
        &=\frac{\beta!}{(\beta-C+1)!}\Big((\beta-C+1)\cdot N_{\mathrm{sp}}(K_{(3,2^{\beta-C})})+3C\cdot N_{\mathrm{sp}}(K_{(2^{\beta-C+1},1)})\\
        &-N_{\mathrm{sp}}(K_{(2^{\beta-C+1})})-(2C-2)\cdot N_{\mathrm{sp}}(K_{(2^{\beta-C+1},1)})\Big)\\
        &=\frac{\beta !}{(\beta - C+1)!} \Big((\beta-C+1)\cdot N_{\mathrm{sp}}(K_{(3,2^{\beta-C})})-N_{\mathrm{sp}}(K_{(2^{\beta-C+1})})\\
        &+(C+2)\cdot N_{\mathrm{sp}}(K_{(2^{\beta-C+1},1)})\Big)
    \end{align*}
    Since this formula has a negative term, we must prove it is always non-negative in order to ensure Schur-positivity. To do so, we prove the inequality
    \begin{equation}
    \label{eq:nsp_ineq}
    N_{\mathrm{sp}}(K_{(2^{\beta-C+1},1)})\ge N_{\mathrm{sp}}(K_{(2^{\beta-C+1})}).
    \end{equation}
    We have that $K_{(2^{\beta-C+1},1)}$ is the incomparability graph of the poset consisting of $\beta-C+1$ disjoint chains of length $2$ and $1$ disjoint chain of length $1$. We label the vertex corresponding to the chain of length $1$ by $u$.
    
    Consider any non-increasing sequence $v_1\nleq\cdots\nleq v_{2(\beta-C+1)}$ through all the vertices of $K_{2^{\beta-C+1}}$. We can map any such sequence to the non-increasing sequence $v_1\nleq\cdots\nleq v_{2(\beta-C+1)}\nleq u$ since $u$ is incomparable to all other vertices. This injection implies the inequality in Equation \ref{eq:nsp_ineq}.

    \item Lastly, if $\lambda=(1^{2\beta+3})$, the SRH $G$-tabloids in $\tilde{\mathcal{T}}_{\lambda,G}$ are in bijection with the spanning non-increasing sequences of $G$ by Theorem \ref{thm:inv_paths_tail}. These tabloids are all positive with no $N$-steps, so Equation \ref{eq:target_coeff_third} follows.

\end{enumerate}
Since the largest stable set in $G$ is the unique stable $3$-set, the SRH $G$-tabloids we count \steph{can} contain at most one rim hook of length $3$ and no longer rim hooks. Therefore, these tabloids \steph{can} have at most one row of length $3$ and no longer rows. Accordingly, Equations \ref{eq:target_coeff_first}-\ref{eq:target_coeff_third} cover all possible partitions corresponding to non-zero coefficients. We conclude $G=K_{(3,2^{\beta})}$ is Schur-positive.
\end{proof}

We note that it is possible to derive explicit combinatorial formulas for counts of non-increasing spanning paths such as $N_{\mathrm{sp}}(K_{(2^{\beta})})$. However, since these counts are inherently non-negative, our argument for the Schur-positivity of $K_{(3,2^{\beta})}$ does not require such formulas.

Together with Theorem \ref{thm:only_K32_may_b_s_pos}, Theorem \ref{thm:K32_s_pos} provides the following Schur-positivity classification for complete multipartite graphs, \steph{and our main result.}

\begin{theorem}
\label{thm:classif}
A complete multipartite graph $K_{\lambda}$ with $\ell(\lambda)\ge 2$ is Schur-positive if and only if $\lambda_i\in\{1,2\}$ for $1\le i \le \ell(\lambda)$ or $\lambda=(3,2^{\beta})$ for some $\beta\ge 1$.
\end{theorem}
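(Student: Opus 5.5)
The classification follows by assembling the results already established, so the plan is a two-direction bookkeeping argument rather than a new computation.

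For the ``if'' direction there are two cases. If $\lambda_i\in\{1,2\}$ for all $i$, then Theorem~\ref{thm:only_K32_may_b_s_pos} gives directly that $K_\lambda$ is $\mathbf{e}$-positive and hence Schur-positive. This rests on $K_\lambda$ being a claw-free incomparability graph, because it has no stable $3$-set, together with \cite{hikita24}. If instead $\lambda=(3,2^\beta)$ with $\beta\ge1$, Schur-positivity is exactly Theorem~\ref{thm:K32_s_pos}. Its proof expresses every possibly nonzero Schur coefficient through Equations~\ref{eq:target_coeff_first}--\ref{eq:target_coeff_third} and uses the injection behind Equation~\ref{eq:nsp_ineq} to dominate the single negative term. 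One small point needs checking: in Equation~\ref{eq:target_coeff_second} the positive term $(C+2)N_{\mathrm{sp}}(K_{(2^{\beta-C+1},1)})$ must absorb $N_{\mathrm{sp}}(K_{(2^{\beta-C+1})})$. This holds since $C+2\ge 1$, and the first term $(\beta-C+1)N_{\mathrm{sp}}(K_{(3,2^{\beta-C})})$ is nonnegative, being zero when $\beta<C$. This is already done in the paper.

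For the ``only if'' direction, suppose $\ell(\lambda)\ge2$, that $\lambda$ has some part at least $3$, and that $\lambda\neq(3,2^\beta)$. By Corollary~\ref{cor:only_balanced_comp_graphs} we may assume $\lambda$ is balanced, so $\lambda=(m^\alpha,(m-1)^\beta)$ with $m\ge3$, $\alpha\ge1$, $\beta\ge0$. I would split into three cases that exhaust the balanced possibilities.
\begin{enumerate}
\item If $\alpha\ge2$, then case (2) in the proof of Theorem~\ref{thm:only_K32_may_b_s_pos} rules $\lambda$ out.
\item If $\alpha=1$ and $\beta\ge2$, then $m\ge4$ is covered by case (3), while $m=3$ gives $\lambda=(3,2^\beta)$, which is excluded.
\item If $\alpha=1$ and $\beta\le1$, then $\ell(\lambda)\ge2$ forces $\beta=1$, so $\lambda=(m,m-1)$. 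This is bipartite, and by \cite[Example 2.5]{WW2020} it is Schur-positive only for $(3,2)$, $(2,1)$, $(2,2)$, $(1,1)$. Only $(3,2)$ has a part at least $3$, and it is of the allowed form with $\beta=1$.
\end{enumerate}
Unbalanced $\lambda$ is handled by Corollary~\ref{cor:only_balanced_comp_graphs}, which completes this direction.

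The only real obstacle is making sure the case split is exhaustive, in particular the boundary cases $\beta=0$ versus $\beta=1$ and $m=3$ with $\alpha=1$. The substantive positivity work, handling the negative contributions for $K_{(3,2^\beta)}$, is already contained in Theorem~\ref{thm:K32_s_pos}.
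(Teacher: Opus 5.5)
Your proposal is correct and is essentially the paper's own argument. The paper also obtains the classification by combining Theorem~\ref{thm:only_K32_may_b_s_pos} (the reduction to balanced $\lambda$, cases (2) and (3), and the appeal to \cite{WW2020} for $(m,m-1)$) with Theorem~\ref{thm:K32_s_pos}; your explicit check that $\alpha\ge 2$, then $\alpha=1,\beta\ge 2$, then $\alpha=1,\beta=1$ exhaust the balanced partitions with a part at least $3$ spells out a step the paper leaves implicit.
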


Theorem \ref{thm:only_K32_may_b_s_pos} also classifies most complete multipartite graphs in terms of $\mathbf{e}$-positivity \steph{since} graphs which are not Schur-positive are not $\mathbf{e}$-positive. The question of whether $K_{(3,2^{\beta})}$ ($\beta\ge 1$) is $\mathbf{e}$-positive, however, remains open. Lastly, we remark that the \steph{SRH $G$-tabloid} counting methods used throughout this paper---and, in particular, Theorem \ref{thm:inv_paths_tail}---have the potential to serve as tools for characterizing the Schur-positivity of other incomparability graphs which are not claw-free.

\bibliographystyle{siam}
\bibliography{biblio}

\end{document}